\newtheorem{lemma}{Lemma}
\newtheorem{theorem}{Theorem}
\newtheorem{claim}{Claim}
\newtheorem{conjecture}{Conjecture}
\newcommand{\Z}{\mathbb{Z}}
\newcommand{\R}{\mathbb{R}}
\newcommand{\abs}[1]{\left\lvert{#1}\right\rvert}
\DeclareMathOperator{\ex}{ex}
\begin{document}
\title{The Maximum Number of Paths of Length Four in a Planar Graph}
\author[1]{Debarun Ghosh}
\author[1,2]{Ervin Gy\H{o}ri} 
\author[3]{Ryan R. Martin} 
\author[1]{Addisu Paulos}
\author[1,2]{Nika Salia}
\author[1]{Chuanqi Xiao}
\author[1,4]{Oscar Zamora} 
\affil[1]{Central European University, Budapest\par
\texttt{oscarz93@yahoo.es, addisu_2004@yahoo.com\\chuanqixm@gmail.com, ghosh_debarun@phd.ceu.edu}}
\affil[2]{Alfr\'ed R\'enyi Institute of Mathematics, Budapest \par
\texttt{gyori.ervin@renyi.mta.hu, nika@renyi.hu }}
\affil[3]{Iowa State University, Ames, IA, USA, \par
\texttt{rymartin@iastate.edu}}
\affil[4]{Universidad de Costa Rica, San Jos\'e}
\maketitle
\begin{abstract}
Let $f(n,H)$ denote the maximum number of copies of $H$  in an $n$-vertex planar graph. 
The order of magnitude of $f(n,P_k)$, where $P_k$ is a path on $k$ vertices, is $n^{{\lfloor{\frac{k-1}{2}}\rfloor}+1}$.  
In this paper we determine the asymptotic value of $f(n,P_5)$ and give conjectures for longer paths.
\end{abstract}
\section{Introduction}
For a given graph $F$, the Tur\'an number $\ex(n,F)$ is the maximum number of edges in an $n$-vertex graph and containing no copy of $F$.  The generalized Tur\' an number $\ex(n, H, F)$ is the maximum number of copies of $H$ in an $n$-vertex  $F$-free graph.

A few examples of $\ex(n, H, F)$, with $H \neq K_2$, were studied first by and A.A.~Zykov in \cite{zykov1949some} and independently by P.~Erd\H os \cite{erdos1962number}. They determined $\ex(n, K_r, K_s)$ for all $r$ and $s$. Later E.~Gy\H ori, J.~Pach and M.~Simonovits \cite{gyoripach} studied $\ex(n, H, K_s)$ for various graphs $H$ when $s \geq 3$.
A different example that has received considerable attention recently is $\ex(n, C_r, C_s)$ for various
values of $r$ and $s$. In 2008, B.~Bollob\'as and E.~Gy\H ori \cite{bollobas2008pentagons} showed that $\ex(n, C_3, C_5) = \Theta (n^{3/2})$, and this paper was the start of a more extensive study of this type of problems.  Their
upper bound has been improved several times, meanwhile
E.~Gy{\H o}ri and H.~Li \cite{gyorili} obtained upper and lower
bounds on $\ex(n, K_3, C_{2k+1})$, that were subsequently improved by 
Z.~F\"uredi and L.~Ozkahya \cite{furediozkahya} by 
N.~Alon and C.~Shikhelman \cite{alon2016many,alon2018additive} . Moreover, the number $\ex(n, C_5, C_3)$ was determined precisely by H.~Hatami, J.~Hladk\'y, D.~Kr\' al', S.~Norine, and A.~Razborov \cite{hatami2013number} and independently by A.~Grzesik \cite{grzesik2012maximum}.
Very recently, L.~Gishboliner and A.~Shapira \cite{gishboliner2020generalized} determined $\ex(n, C_r, C_s)$, up to a constant factor, for
all $r,s>3$, and, additionally, they studied $\ex(n, C_3, C_s)$ for even $s$. Some additional more precise
estimates for $\ex(n, C_r, C_s)$ are known (see \cite{gerbner2020generalized,grzesik2018maximum}).
For more results, we refer the reader to \cite {ergemlidze2018triangles,ergemlidze2019note,gyori2018maximum}.

%introduced in \cite{bollobas2008pentagons,erdos1962number} and was systematically studied by N. Alon, C. Shikhelman \cite{alon2016many,alon2018additive}, by introducing the function $\ex(n,H,\mathcal{F})$, which is the maximum number of copies of $H$ in an $n$-vertex and $\mathcal{F}-$free graph.  
%In the case $\mathcal{F}=\{F\}$, it is denoted in short by $\ex(n,H,F)$. 

%The earliest result of this type of extremal graph problem is due to  Zykov \cite{zykov1949some} and independently by P. Erd\H os \cite{erdos1962number}, who determined $\ex(n,K_s,K_t)$ exactly for all $s$ and $t$. Subsequently, a variety of asymptotic and exact results were obtained.  The most well-known of which is the determination of $\ex(n, C_5, C_3)$ by H. Hatami, J. Hladk\'y, D. Kr\' al', S. Norine, and A. Razborov \cite{hatami2013number} and independently by A. Grzesik \cite{grzesik2012maximum}.  
%For more results, we refer the reader to \cite {grzesik2018maximum, gyori2018maximum,bollobas2008pentagons,ergemlidze2019note,ergemlidze2018triangles}.

In this paper, we study generalized Tur\'an number of graphs in planar graphs. Let $f(n,H)$ be the maximum number of copies of $H$ in an $n$-vertex planar graph.   Equivalently such problems can be described as $\ex(n,H,\mathcal{F})$, where $\mathcal{F}$ is the family of subdivisions of $K_5$ and $K_{3,3}$ \cite{kuratowski1930probleme}.   S.~Hakimi and E.F.~Schmeichel \cite{hakimi1979number} determined the exact value of $f(n,H)$ when $H$ is a cycle of length $3$ and cycle of length $4$. 
Moreover, they gave a conjecture for the exact value of $f(n, H)$ when $H$ is a cycle of length five. Later E.~Gy\H{o}ri, A.~Paulos, N.~Salia, C.~Tompkins, and O.~Zamora confirmed it in \cite{gyHori2019maximum1}.  In the same paper, the order of magnitude is also given for $f(n, H)$ when $H$ is a cycle of length more than $4$.  %Here are the results related to $f(n, H)$ when $H$ is a cycle of length 3, 4 and 5. 
%\begin{theorem}\cite{hakimi1979number}
%For $n\geq 3$, $f(n,C_3)=3n-8$, where $C_3$ is a cycle of length 3.  
%\end{theorem}
%\begin{theorem}\cite{hakimi1979number}
%For $n\geq 4$, $f(n,C_4)=\frac{1}{2}(n^2+3n-22)$, where $C_4$ is a cycle of length 4.
%\end{theorem}
%\begin{theorem}\cite{gyHori2019maximum1}
% For $n \geq 8$, $f(n,C_5)=2n^2-10n+12$, where $C_5$ is a cycle of length 5.
%\end{theorem}

It is natural to ask the value of $f(n, P_{k})$, where $P_{k}$ is a path of  $k$ vertices.  % (here, the length of a path is the number of edges).  
Clearly $f(n, P_2)=3n-6$ if $n\geq 3$.  
N.~Alon and Y.~Caro \cite{alon1984number} determined the exact value of $f(n, H)$, where $H$ is a complete bipartite graph in which the smaller class is of size $1$ or $2$.
Consequently from the former result, the value of $f(n, P_3)$ is determined.  
In particular they showed that   
\begin{theorem}\cite{alon1984number}
For $n\geq 4$, $f(n,P_3)=n^2+3n-16$. 
\end{theorem}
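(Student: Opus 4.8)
The plan is to express the quantity being maximized in terms of vertex degrees and then combine a convexity argument with the structural restrictions that planarity imposes on high-degree vertices. Since a copy of $P_2$ is determined by its central vertex together with an unordered pair of its neighbours, the number of copies of $P_2$ in a graph $G$ equals $\sum_{v}\binom{d(v)}{2}$, where $d(v)$ is the degree of $v$. Thus I would restate the theorem as the assertion that the maximum of $\sum_{v}\binom{d(v)}{2}$ over all $n$-vertex planar graphs equals $n^2+3n-16$.

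For the lower bound I would exhibit the natural extremal construction: take two vertices $u,v$ joined to each other and to every other vertex, and let the remaining $n-2$ vertices form a path $P_{n-2}$. This graph is a planar triangulation (it has exactly $3n-6$ edges); here $u$ and $v$ have degree $n-1$, the two path endpoints have degree $3$, and the $n-4$ interior path vertices have degree $4$. A direct computation gives $2\binom{n-1}{2}+2\binom{3}{2}+(n-4)\binom{4}{2}=n^2+3n-16$, so $f(n,P_2)\ge n^2+3n-16$.

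For the upper bound, first note that adding an edge never decreases $\sum_{v}\binom{d(v)}{2}$, so an extremal graph may be assumed to be a maximal planar graph, i.e.\ a triangulation with $\sum_{v}d(v)=2(3n-6)=6n-12$ and $3\le d(v)\le n-1$. The heart of the argument is a structural lemma bounding the number of high-degree vertices: for $n\ge 6$ at most two vertices can have degree $n-1$, since three such vertices would, together with any three of the remaining $n-3\ge 3$ vertices, contain a $K_{3,3}$; more generally, $K_{3,3}$- and $K_5$-freeness limits how many vertices can simultaneously carry large degree while sharing neighbourhoods. I would then argue that an extremal triangulation has exactly two vertices $u,v$ of degree $n-1$, and that being a triangulation in which $u$ and $v$ dominate all others forces the remaining $n-2$ vertices to induce precisely a path (the only way to add a spanning tree of edges to $K_{2,n-2}+uv$ inside the plane), recovering the construction above and hence the claimed value.

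The main obstacle is that the edge bound alone is far too weak: by convexity one could, subject only to $\sum_{v}d(v)=6n-12$ and $d(v)\le n-1$, place roughly six vertices at degree $n-1$ and reach about $3n^2$, so the genuine planar constraint on the \emph{number} of high-degree vertices — not merely the total number of edges — must be exploited. Concretely, the difficulty is to control the contribution $\sum_{w\neq u,v}\binom{d(w)}{2}$ of the remaining vertices and to show it cannot compensate for any deficit in $\binom{d(u)}{2}+\binom{d(v)}{2}$; I expect to handle this by an exchange or discharging argument showing that moving degree away from the two dominating vertices into the rest strictly decreases the total, which pins the optimum down at the two-apex-plus-path configuration.
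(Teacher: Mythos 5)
You are attempting to prove a statement that the paper itself only quotes from Alon and Caro \cite{alon1984number}; there is no proof in the paper to compare against, so I can only judge your sketch on its own merits. Your counting identity (each $P_2$ is a centre plus an unordered pair of its neighbours, so the count is $\sum_v\binom{d(v)}{2}$), your extremal construction, and the arithmetic $2\binom{n-1}{2}+2\binom{3}{2}+(n-4)\binom{4}{2}=n^2+3n-16$ are all correct, as is the reduction to triangulations. The problem is the upper bound, where — as your own final paragraph concedes — the decisive step is missing. The only structural input you actually state is that at most two vertices can have degree $n-1$ (for $n\ge 6$), and that is not enough: a degree sequence such as $(n-1,\,n-1,\,n-2,\,n-2,\,\dots)$ is not excluded by that observation, yet even one additional vertex of degree $\Theta(n)$ ruins the bound. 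The assertions that an extremal triangulation ``has exactly two vertices of degree $n-1$'' and that two dominating vertices force the remaining vertices to induce a path are exactly the statements that need proof; a priori the optimum could be attained by a triangulation with no dominating vertex at all, and you give no exchange argument that rules this out.

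The missing ingredient is a quantitative bound on the sum of the $k$ largest degrees, precisely of the type the present paper proves as Lemma~\ref{lm1}: for any $S\subseteq V(G)$ with $|S|=k$ in an $n$-vertex planar graph, $\sum_{v\in S}d(v)\le 2n+6k-16$. With $k=3$ this gives $d_1+d_2+d_3\le 2n+2$, so once $d_1+d_2$ is close to $2n-2$ every other degree is at most about $4$; combined with $\sum_i d_i=6n-12$, the trivial bound $d_1+d_2\le 2n-2$, and convexity of $x\mapsto\binom{x}{2}$, a majorization argument pins $\sum_i d_i^2$ at the value $2(n-1)^2+16(n-4)+18=2n^2+12n-44$ attained by your construction, which is equivalent to the claimed $n^2+3n-16$. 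Without such a lemma (or an equivalent $K_{3,3}$-based counting argument controlling how much total degree several vertices can share), your plan does not close: the ``exchange or discharging argument'' you defer to is the entire content of the theorem. You should also check small cases separately ($n=4,5$, where more than two vertices of degree $n-1$ are possible), even though the formula happens to hold there.
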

Recently, E.~Gy\H{o}ri, A.~Paulos, N.~Salia, C.~Tompkins, and O.~Zamora in \cite{gyHori2019maximum2} determined the exact value of $f(n, P_4)$. 
They proved the following results.  
\begin{theorem}\cite{gyHori2019maximum2}\label{thm1}
We have,
\begin{align*}
f(n,P_4)= \begin{cases}
12, &\text{if $n=4$;}\\
147, &\text{if $n=7$;}\\
222, &\text{if $n=8$;}\\
7n^2-32n+27, &\text{if $n=5, 6$ and $n\geq 9$.}
\end{cases}
\end{align*}
\label{main}
\end{theorem}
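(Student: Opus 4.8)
The plan is to count copies of $P_3$ through their central edge and then to optimise the resulting degree expression over planar graphs. Write $d_v$ for the degree of a vertex $v$, let $t$ be the number of triangles, and let $c(a,b)$ denote the number of common neighbours of $a$ and $b$. A copy of $P_3$ with central edge $ab$ is obtained by attaching a neighbour $x\neq b$ of $a$ and a neighbour $y\neq a$ of $b$ with $x\neq y$; the degenerate choices $x=y$ are exactly the common neighbours of $a$ and $b$. Summing over all edges and using $\sum_{ab\in E}c(a,b)=3t$ gives the identity
\begin{equation*}
N(P_3)=\sum_{ab\in E}\big[(d_a-1)(d_b-1)-c(a,b)\big]=\sum_{ab\in E}(d_a-1)(d_b-1)-3t.
\end{equation*}
For the lower bound I would propose the double-apex triangulation: two adjacent vertices $u,v$ joined to all other vertices, the remaining $n-2$ vertices forming a path. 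Here $d_u=d_v=n-1$, the path contributes two vertices of degree $3$ and $n-4$ of degree $4$, and $t=(n-2)+2(n-3)=3n-8$; substituting into the identity yields precisely $7n^2-32n+27$. Thus the entire content of the theorem is the matching upper bound.

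First I would reduce to triangulations. Because $P_3$ is a fixed graph, deleting no edges, the set of $P_3$-subgraphs can only grow when an edge is added; hence $N(P_3)$ is monotone under edge addition and an extremal graph may be assumed edge-maximal, i.e.\ a triangulation with $|E|=3n-6$, $\sum_v d_v=6n-12$, and exactly $2n-4$ triangular faces. The problem then becomes: maximise $\sum_{ab\in E}(d_a-1)(d_b-1)-3t$ over all triangulations on $n$ vertices. Next I would isolate where the quadratic growth lives. Since the total degree is only $6n-12$, at most a bounded number of vertices have degree $\Omega(n)$, and the $n^2$-order terms of $\sum_{ab}(d_a-1)(d_b-1)$ can only come from edges incident to such high-degree vertices. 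Letting $u,v$ be two vertices of largest degree, the leading behaviour is governed by $d_u$ and $d_v$, which are maximised at $n-1$; the structural input I would use to control the lower-order and triangle terms is that the neighbourhood of any vertex induces an outerplanar graph, so a vertex of degree $d$ spans at most $2d-3$ edges among its neighbours, bounding both $c(a,b)$ and the triangles through it. This should reduce the optimisation to a finite problem over the admissible degree sequences.

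The step I expect to be the main obstacle is the \emph{simultaneous} tight control of $S:=\sum_{ab}(d_a-1)(d_b-1)$ and of $t$, because these quantities are positively correlated: concentrating degree on $u$ and $v$ inflates $S$ but also creates many triangles, each of which costs $3$. Using only the crude bound $c(a,b)\ge 2$ (valid since every edge of a triangulation lies in two faces) gives $N(P_3)\le S-2(3n-6)$, which overshoots the truth by exactly $3(n-4)$ — precisely the contribution of the separating triangles $uvw_i$ in the construction. In other words the extremal graph does \emph{not} minimise $t$: it deliberately accepts $n-4$ extra (separating) triangles in exchange for a larger degree-product sum. Pinning down this trade-off exactly, and thereby determining the correct linear term $-32n+27$ rather than a weaker bound, is the technical heart of the argument.

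Finally, I would handle the exceptional cases by direct (finite) analysis. The generic optimum above is tied or beaten for small $n$: three-mutually-adjacent-apex type triangulations cram more edges among few vertices when the path part is too short to matter, which is why $n=4,7,8$ give $12,147,222$ rather than the values $11,146,219$ predicted by $7n^2-32n+27$. The proof must therefore exhibit the better constructions for these cases, verify them by hand, and show that for $n=5,6$ and all $n\ge 9$ the double-apex triangulation strictly dominates every competitor, closing the gap between the upper bound and the construction.
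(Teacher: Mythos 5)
This statement is quoted from the companion paper \cite{gyHori2019maximum2}; the present paper contains no proof of it, so there is nothing internal to compare against. Judged on its own terms, your write-up gets the preparatory material right: the identity $N(P_3)=\sum_{ab\in E}\bigl[(d_a-1)(d_b-1)-c(a,b)\bigr]=\sum_{ab\in E}(d_a-1)(d_b-1)-3t$ is correct, the reduction to triangulations by edge-monotonicity is valid, and your evaluation of the double-apex construction ($S=7n^2-23n+3$, $t=3n-8$, hence $7n^2-32n+27$) checks out, as does your identification of the exceptional values $11,146,219$ versus $12,147,222$ at $n=4,7,8$.

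However, there is a genuine gap: the upper bound, which is the entire content of the theorem, is never established. You candidly flag the obstacle yourself --- the crude estimate $c(a,b)\ge 2$ overshoots by $3(n-4)$, and the quantities $S$ and $t$ are positively correlated so they cannot be optimised separately --- but you offer no mechanism to close this gap. ``This should reduce the optimisation to a finite problem over the admissible degree sequences'' is not an argument: the outerplanarity of neighbourhoods bounds the number of edges inside a neighbourhood by $2d-3$, but turning that into the exact coefficient $-32$ in the linear term, and simultaneously ruling out all competing degree distributions (e.g.\ three or more high-degree vertices, or configurations where the high-degree vertices are non-adjacent), requires a detailed structural case analysis of triangulations that is entirely absent. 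Likewise, ``verify the small cases by hand'' presupposes knowing which finitely many candidate graphs to check, which again needs the structural argument. As it stands the proposal proves the lower bound $f(n,P_3)\ge 7n^2-32n+27$ for $n\ge 5$ (plus the better small-case constructions) and correctly frames the problem, but the matching upper bound --- the technical heart, as you say --- is missing.
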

The same authors in \cite{gyHori2020generalized} also gave the order of magnitude of $f(n, P_{k})$.
\begin{theorem}\cite{gyHori2020generalized}
$f(n, P_{k})= \Theta(n^{{\lfloor{\frac{k-1}{2}}\rfloor}+1})$. 
\end{theorem}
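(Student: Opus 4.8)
The plan is to establish matching lower and upper bounds of order $n^{\floor{k/2}+1}$; throughout write $m=\floor{k/2}$. The lower bound will come from an explicit construction and the upper bound from a reduction to counting walks, where the even case is immediate from spectral sparsity and the odd case is the genuine difficulty.

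For the lower bound, the guiding observation is that a path on $k+1$ vertices contains an independent set of size $\ceil{(k+1)/2}=\floor{k/2}+1$, namely every second vertex. I would build a planar graph in which these $\floor{k/2}+1$ positions are occupied by freely chosen low-degree vertices, each ranging over $\Theta(n)$ options, while the remaining positions are occupied by a constant number of hub vertices. Concretely, place $O(1)$ hubs on a line, join consecutive hubs by an edge, insert between each consecutive pair a set of $\Theta(n)$ vertices adjacent to both, and hang $\Theta(n)$ pendant vertices at the ends as needed. Drawing the inserted common neighbours of consecutive hubs as parallel arcs keeps the drawing planar (it is a chain of $K_{2,\cdot}$ bundles). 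A copy of $P_k$ that runs pendant--hub--inserted--hub--$\cdots$ uses one freely chosen vertex in each of the $\floor{k/2}+1$ independent positions, so the number of such paths is $\Theta(n^{\floor{k/2}+1})$ because the number of hubs is constant. This gives $f(n,P_k)=\Omega(n^{\floor{k/2}+1})$.

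For the upper bound I would first pass from paths to walks: every copy of $P_k$ is in particular a walk of length $k$, so with $A$ the adjacency matrix and $\mathbf{1}$ the all-ones vector, the number of copies of $P_k$ is at most $w_k:=\mathbf{1}^{\top}A^{k}\mathbf{1}$, the number of walks of length $k$. The task reduces to proving $w_k=O(n^{\floor{k/2}+1})$ for planar $G$. Planarity enters only through sparsity: since $e(G)\le 3n-6$, the spectral radius obeys $\rho(A)^2\le\sum_i\lambda_i^2=\operatorname{tr}(A^2)=2e(G)\le 6n$, so $\rho(A)=O(\sqrt n)$. When $k=2m$ is even this finishes the proof, for diagonalising the symmetric matrix $A$ with eigenvalues $\lambda_i$ and unit eigenvectors $\phi_i$ gives
\[
w_{2m}=\sum_i\lambda_i^{2m}(\phi_i^{\top}\mathbf{1})^2\le\rho(A)^{2m}\sum_i(\phi_i^{\top}\mathbf{1})^2=\rho(A)^{2m}\,\|\mathbf{1}\|_2^2=\rho(A)^{2m}\,n=O(n^{m}\cdot n)=O(n^{m+1}),
\]
since $2m$ is even so each $\lambda_i^{2m}\ge 0$ and is at most $\rho(A)^{2m}$.

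The main obstacle is the odd case $k=2m+1$. Here the same spectral estimate yields only $w_{2m+1}\le\rho(A)^{2m+1}n=O(n^{m+3/2})$, which is off by a factor $\sqrt n$: the power is odd, so the contributions $\sum_i\lambda_i^{2m+1}(\phi_i^{\top}\mathbf{1})^2$ are signed, and the extremal examples (stars, $K_{2,n-2}$) attain the correct $\Theta(n^{m+1})$ only through massive cancellation that a bound on $\rho(A)$ alone cannot detect. I would therefore treat the odd case combinatorially. Anchoring a $P_{2m+1}$ at its central edge $v_mv_{m+1}$ gives, with $x:=A^{m}\mathbf{1}$ (so $x_v$ is the number of length-$m$ walks from $v$),
\[
w_{2m+1}=x^{\top}Ax=\sum_{uv\in E(G)}2\,x_u x_v .
\]
The subtlety is that a high-degree hub $v$ has large $x_v$ but is adjacent almost exclusively to low-$x$ vertices, so the bilinear form stays small while symmetrising via $x_ux_v\le(x_u^2+x_v^2)/2$ is fatally lossy (it already fails for the star). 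The plan is instead to split vertices into dyadic degree classes $V_j=\{v:2^j\le d(v)<2^{j+1}\}$, use $\abs{V_j}=O(n/2^j)$ together with the planar bound on the number of edges between two classes, and estimate the contribution of each pair of classes to $\sum_{uv\in E}x_ux_v$ separately, feeding in the already-established even bound $\sum_v x_v^2=w_{2m}=O(n^{m+1})$. Pushing this degree-weighted estimate down to $O(n^{m+1})$, uniformly in $k$, is the crux of the whole argument; the even case and the lower bound are comparatively routine.
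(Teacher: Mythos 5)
Your lower bound and your even-case upper bound are both correct. The chain of hubs with $\Theta(n)$-size bundles of common neighbours (plus pendants) is planar and gives $\Omega(n^{\lfloor k/2\rfloor+1})$ copies of $P_k$, and the spectral computation $w_{2m}=\sum_i\lambda_i^{2m}(\phi_i^{\top}\mathbf{1})^2\le\rho(A)^{2m}\,n$ combined with $\rho(A)^2\le\operatorname{tr}(A^2)=2e(G)\le 6n$ is a complete proof that the number of length-$2m$ walks, hence of length-$2m$ paths, is $O(n^{m+1})$. (Note that the present paper only quotes this theorem from the cited reference and contains no proof of its own to compare against.)

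The odd case, however, is a genuine gap, and you flag it yourself as ``the crux.'' You reduce to bounding $x^{\top}Ax$ with $x=A^{m}\mathbf{1}$ and propose to finish using only (a) the dyadic degree classes and the planar bounds on edges between them, and (b) the single piece of information $\sum_v x_v^2=w_{2m}=O(n^{m+1})$. That information is provably insufficient. On the star $K_{1,n-1}$, take the nonnegative vector $y$ with $y_c=n^{(m+1)/2}$ at the centre and $y_\ell=n^{m/2}$ at each leaf: then $\sum_v y_v^2=O(n^{m+1})$, the degree classes and all inter-class edge counts are exactly those of a legitimate planar graph, and yet $\sum_{uv\in E}y_uy_v=(n-1)\,n^{(m+1)/2}\,n^{m/2}=\Theta(n^{m+3/2})$. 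So no argument using only the $\ell_2$-norm of $x$ together with the degree/edge statistics of $G$ can close the $\sqrt{n}$ gap; what is missing is a pointwise relation between $x_v$, $d(v)$, and the values of $A^{m-1}\mathbf{1}$ on the neighbourhood of $v$ (for instance, in the star a leaf actually has $x_\ell$ bounded by the value at its unique neighbour one step earlier, far below $n^{m/2}$), and you have not identified that relation or how to propagate it through $m$ applications of $A$. Since odd $k$ is half of the statement, the proposal as written does not prove the theorem; a combinatorial induction that charges each of the $\lfloor k/2\rfloor+1$ independent positions of the path a factor of $n$ and every other position a constant (via a degeneracy ordering) is the standard way to close it.
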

In this paper we give an asymptotic value of $f(n, P_5)$. 
\begin{theorem}\label{4}
$f(n, P_5)=n^3+O(n^2)$.
\end{theorem}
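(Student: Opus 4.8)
The plan is to establish the matching bounds $n^3 - O(n^2) \le f(n,P_4) \le n^3 + O(n^2)$. For the lower bound I would take the complete bipartite graph $K_{2,n-2}$, which is planar. Its only copies of $P_4$ have the form $y - a - z - b - w$, where $a,b$ are the two vertices of the small side and $y,z,w$ are three distinct vertices of the large side: the central vertex $z$ is adjacent to both $a$ and $b$, while $y$ is attached through $a$ and $w$ through $b$. Choosing $z$, then $y$, then $w$ gives exactly $(n-2)(n-3)(n-4) = n^3 - O(n^2)$ such paths, so $f(n,P_4) \ge n^3 - O(n^2)$ (adding a few edges among the leaves only affects lower-order terms).

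For the upper bound, write $d_v$ for the degree of $v$. Every copy of $P_4$ has a unique central vertex $x = v_2$, and its two length-two arms read $x - v_1 - v_0$ and $x - v_3 - v_4$. Choosing an unordered pair $\{a,b\}\subseteq N(x)$ to play the role of $\{v_1,v_3\}$ and then extending freely, $v_0 \in N(a)\setminus\{x\}$ and $v_4 \in N(b)\setminus\{x\}$, overcounts the copies centred at $x$ (the only losses are the degenerate choices $v_0\in\{b,v_4\}$ or $v_4=a$). Hence, writing $\mathrm{codeg}(a,b)=\abs{N(a)\cap N(b)}$,
\[
\#P_4 \;\le\; \sum_x \sum_{\{a,b\}\subseteq N(x)}(d_a-1)(d_b-1) \;\le\; \sum_{\{a,b\}} \mathrm{codeg}(a,b)\,d_a d_b \;=:\; W ,
\]
so it suffices to prove $W \le n^3 + O(n^2)$. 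It is instructive to record the identity $2W = \sum_x D(x)^2 - \sum_v d_v^3$, where $D(x)=\sum_{a\sim x}d_a$ is the degree-sum of the neighbourhood of $x$: in $K_{2,n-2}$ the two terms on the right are $\approx 4n^3$ and $\approx 2n^3$, so \emph{both are $\Theta(n^3)$} and the sharp bound can only follow by tracking their difference. Any estimate that discards $\sum_v d_v^3$ (for instance $W\le\tfrac12\sum_x D(x)^2$, or a direct Cauchy–Schwarz bound using $D(x)\le 2\abs{E}\le 6n$) loses a constant factor and yields only $W=O(n^3)$, merely reproving the known order of magnitude.

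To reach the sharp constant I would bound $W=\sum_{\{a,b\}}\mathrm{codeg}(a,b)d_ad_b$ by separating pairs according to their degrees. The contribution of pairs with a low-degree endpoint is negligible: if $d_b\le T$ then $\mathrm{codeg}(a,b)\,d_a d_b \le T\,\mathrm{codeg}(a,b)\,d_a$, and summing over all such ordered pairs gives at most $T\sum_a d_a\sum_b\mathrm{codeg}(a,b) = T\sum_a d_a D(a) = 2T\sum_{uv\in E}d_u d_v = O(Tn^2)$, using the standard planar estimate $\sum_{uv\in E}d_u d_v = O(n^2)$ (immediate from $5$-degeneracy together with $\sum_v d_v\le 6n$). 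The whole difficulty is therefore concentrated in the pairs $\{a,b\}$ for which \emph{both} endpoints have large degree and large codegree. This is where planarity must enter decisively: three vertices of a planar graph cannot have three common neighbours (that would be a $K_{3,3}$), and more quantitatively the $K_{3,3}$- and $K_5$-minor-free structure forbids many high-degree vertices from sharing large common neighbourhoods at once. The intended consequence is that, up to an $O(n^2)$ error, the degree-weighted codegree mass $\mathrm{codeg}(a,b)\,d_a d_b$ can be charged to essentially a single dominant pair of ``hub'' vertices, reproducing precisely the $K_{2,n-2}$ count and giving $W\le n^3+O(n^2)$.

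I expect the main obstacle to be exactly this last step: converting the qualitative $K_{3,3}$-freeness into the sharp quantitative statement that the high-degree/high-codegree pairs contribute at most $n^3+O(n^2)$. A convenient reformulation is to prove $\sum_x\sum_{\{a,b\}\subseteq N(x)}d_a d_b \le n^3+O(n^2)$ by controlling, for each centre $x$, how much degree can accumulate on $N(x)$ beyond its two heaviest neighbours; the planar embedding around each high-degree vertex (its neighbourhood induces an outerplanar graph, and the edges incident to it number $O(n)$) should let one show that the ``third-and-later'' neighbours contribute only $O(n^2)$ in aggregate. Carrying this out rigorously — most plausibly through a discharging argument or a careful case analysis on the few vertices of degree $\Omega(n)$, while keeping every discarded term provably $O(n^2)$ — is the technical heart of the proof.
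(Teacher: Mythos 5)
Your lower bound via $K_{2,n-2}$ is correct, and your opening reduction to showing $W=\sum_{\{a,b\}} d(a,b)\,d_ad_b \le n^3+O(n^2)$ (where $d(a,b)$ denotes the codegree) is exactly the paper's starting point. But the proposal then stops short of a proof: after disposing of the pairs with a low-degree endpoint, you defer the pairs in which both endpoints have large degree to an unproven ``charge everything to a single hub pair'' step based on $K_{3,3}$-freeness, and you yourself acknowledge that carrying this out is the technical heart. That heart is missing, so what you have is a plan rather than a proof, and the route you sketch (quantitative codegree control from forbidden $K_{3,3}$'s, discharging around the high-degree vertices) is not obviously executable and is not how the paper proceeds.

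The idea you are missing is that the crude substitution $d(a,b)\le\min\{d_a,d_b\}$ --- which you implicitly set aside while worrying about losing constant factors --- is in fact lossless to first order: it gives $W\le\sum_{i<j}x_ix_j^2$ for the decreasing degree sequence $(x_1,\dots,x_n)$, and on the extremal sequence $(n-2,n-2,2,\dots,2)$ this upper bound is still $n^3+O(n^2)$. The decisive planarity input is then not codegree structure at all but the prefix constraint of Lemma~\ref{lm1}: every $k$-subset $S$ satisfies $\sum_{v\in S}d(v)\le 2n+6k-16$, because $e(S,\bar S)\le 2n-4$ (bipartite planar) and $e(S)\le 3k-6$. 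This turns the whole upper bound into maximizing $\sum_{i<j}x_ix_j^2$ over integer sequences obeying these linear constraints (the set $A_n$), which is settled by elementary perturbation arguments and a case analysis (Lemmas~\ref{lm4}--\ref{lm6}): at a maximizer either $x_1=n$, or $x_2\le n/18$, or all but $O(1)$ of the $x_i$ are at most $6$, and each case yields $n^3+O(n^2)$. Replacing your unproven high-degree/high-codegree step with this reduction is the way to complete the argument.
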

This bound is asymptotically the best possible.  Consider the maximal planar graph on $n$ vertices containing two degree $n-1$ vertices as shown in Figure \ref{fig1}.  It can be checked that this graph contains at least $n^3$ copies of $P_5$. 
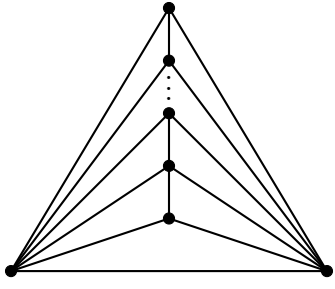
\begin{figure}[h]
\centering
\begin{tikzpicture}[scale=0.7]
\foreach \x in{1,2,3,4,5}{\draw[fill=black](0,\x)circle(3pt);};
\draw[fill=black](-3,0)circle(3pt);
\draw[fill=black](3,0)circle(3pt);
\foreach \x in{1,2,4}{\draw[thick](0,\x)--(0,\x+1);}
\draw (0,3.625)node{$\vdots$};
\foreach \x in{1,2,3,4,5}{\draw[thick](-3,0)--(0,\x)--(3,0);}
\draw[black,thick](-3,0)--(3,0);
\end{tikzpicture}
\caption{A graph on $n$ vertices showing  the  bound  in Theorem \ref{4}  is  asymptotically best possible.}
\label{fig1}
\end{figure}

Before we proceed to the proof of our result, we mention some notation.  For a graph  $G$, we use the notation $V(G)$ and $E(G)$ respectively for the vertex and edge sets of the graph.  The number of paths of length $4$ in $G$ is denoted by $P_5(G)$.  For a vertex $v\in V(G)$, the degree of $v$ is denoted by $d_G(v)$.  We may omit the subscript and write simply $d(v)$ if the underlying graph is clear.   Let $u$, $w\in V(G)$.  We denote the number of vertices in $G$ which are adjacent to both vertices by $d(u,w)$. 

%\section{Notations and Preliminaries}
% \section{Preliminaries}

\section{Proof of Theorem \ref{4}} 
For any given graph $G$ and vertices $u$ and $v$ in $G$, it is easy to see that the number of paths of length $4$ in the graph with $u$ and $v$ the two vertices next to the terminal vertices of the path is at most $d(u)d(u, v)d(v)$. Thus,  
\begin{align*}
P_5(G)\leq \frac{1}{2}\sum_{u\in V(G)}\sum_{u\neq v\in V(G)}d(u)d(v)d(u, v).
\end{align*}

Notice that this bound is crude in as much as we can get better order lower terms.
Since $d(u, v)\leq \min\{d(u), d(v)\}$, then 
\begin{align*}%\label{1}
P_5(G)\leq \frac{1}{2}\sum_{u\in V(G)}\sum_{u\neq v\in V(G)}d(u)d(v)\min\{d(u), d(v)\}. \end{align*}
So if $(x_1, x_2, x_3, \dots, x_n)$ is the degree sequence of $G$, arranged in decreasing order, we have that 
\begin{align*}%\label{2}
P_5(G) \leq \sum_{i=1}^{n-1}\sum_{j=i+1}^{n}x_ix_j^2.
\end{align*}
%\end{remark}
% In order to prove Theorem \ref{4}, we will show the following theorem. 
% \begin{theorem}\label{5}
% Let $G$ be a planar graph on $n$ vertices with decreasing degree sequence $(x_1,x_2,x_3,\dots,x_n)$. Then $$\sum_{i=1}^{n-1}\sum_{i<j}x_ix_j^2\leq n^3+O(n^2).$$
% \end{theorem}
To prove Theorem \ref{4}, we need the following lemma. 
\begin{lemma}\label{lm1}
Let $n\geq k\geq 3$ and let $G$ be a planar graph on $n$ vertices such that $S\subseteq V(G)$ with $|S|=k$. Then
$$\sum_{v\in S}d(v)\leq 2n+6k-16.$$
\end{lemma}
\begin{proof}
Let $G'$ be the graph induced by $S$.  Since $G'$ is planar and is not $K_2$, $$\sum\limits_{v\in S}d_{G'}(v)\leq 6k-12.$$
Now we count the number of edges between the vertex sets $S$ and $V(G)\setminus S$, say $e=e(S,V(G)\setminus S)$; that is, the number of edges in the planar bipartite graph with color classes $S$ and $V(G)\setminus S$. Since the graph is bipartite, it is also triangle-free.  Thus, each non-exterior face uses at least $4$ edges. In the case of the exterior face, bridges count twice when counting the number of edges that border the face. So the exterior face has length at least $4$ unless the graph has only one edge.

Hence, if $e>1$, then $4f\leq 2e$, where $f$ is the number of faces in the bipartite subgraph.  Using the inequality and Euler's formula, $n+f=e+2$, we obtain $e=e(S,V(G)\setminus S)\leq 2n-4$.  Therefore, \begin{displaymath}
\sum_{v\in S}d(v)=\sum\limits_{v\in S}d_{G'}(v)+e(S,V(G)\setminus S)\leq 2n+6k-16.\end{displaymath}
If $e=1$, then $\sum_{v\in S}d(v)\leq 6k-11\leq 2n+6k-16$ because $n\geq 3$.
\qedhere
\end{proof}
Given $n\geq 3$, we define the set 
\begin{align*}
A_n = \bigg\{(x_1, x_2, x_3,\dots, x_n) \in \Z^n :~ & n\geq x_1\geq x_2 \geq \cdots \geq x_n \geq 0, \forall k \in \{3, \dots, n\}, \\& \sum_{i=1}^k x_i \leq 2n+6k-16 \ \mbox{ and } \sum_{i=1}^{n}x_i\leq 6n-12\bigg\}.
\end{align*}
Let $(x_1, x_2, \dots, x_n)$ be the degree sequence of an $n$-vertex planar graph $G$ in decreasing order. Since $\sum\limits_{v\in V(G)}d(v) = 2\abs{E(G)}\leq  6n-12$, by Lemma~\ref{lm1}, we have $(x_1, x_2, \dots, x_n)\in A_n$.

Consider the function $S_n:\R^n \to \R$ by $$S_n(x_1, x_2, \dots, x_n) =  \sum_{i}^{n-1}\sum_{j=i+1}^{n}x_ix_j^2,$$ 
%Note that the degree sequence of $G$ is an element of $A_n$. 
then Theorem~\ref{4} will be a corollary of the following theorem.

\begin{theorem}
For $n\geq 3$ and every $(x_1, x_2, \dots, x_n) \in A_n$, we have \begin{displaymath}
S_n(x_1, x_2, \dots, x_n) \leq n^3 +O(n^2).
\end{displaymath}
\label{bound}
\end{theorem}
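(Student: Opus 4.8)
The plan is to maximize the symmetric function $S_n(x_1,\dots,x_n)=\sum_{i<j}x_ix_j^2$ over the polytope-like integer region $A_n$, and to argue that the maximum is dominated by the contribution of a few large coordinates together with a bulk of coordinates of bounded size. The crucial structural observation is that in the extremal configuration, the constraints $\sum_{i=1}^k x_i\le 2n+6k-16$ should essentially be tight for small $k$: to make the sum $\sum_{i<j}x_ix_j^2$ large we want the large variables as large as possible, so I expect $x_1\approx x_2\approx n$ (each vertex can have degree at most $n-1$), and then the remaining degree budget $6n-12-x_1-x_2\approx 4n$ distributed among $x_3,\dots,x_n$. The key point is that once $x_1,x_2$ are fixed near $n$, the term $x_j^2$ is largest when paired with the largest available $x_i$, so I would first isolate the leading term.

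\medskip

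First I would split $S_n$ according to whether the linear factor $x_i$ is one of the two largest coordinates. Writing $S_n=\sum_{i<j}x_ix_j^2$, the dominant contribution comes from $i\in\{1,2\}$, namely $x_1\sum_{j>1}x_j^2+x_2\sum_{j>2}x_j^2$. Since $\sum_{j\ge 1}x_j\le 6n-12$ and each $x_j\le n$, we have $\sum_j x_j^2\le n\sum_j x_j\le n(6n-12)=O(n^2)$; multiplying by $x_1\le n$ gives that this block is $O(n^3)$, which is the correct order. To pin down the constant $1$ in $n^3$, I would use that $x_1^2$ and $x_2^2$ themselves are at most $n^2$ while the tail $\sum_{j\ge 3}x_j^2$ is constrained by the convexity/majorization inequalities coming from $A_n$: given $\sum_{j\ge 3}x_j\le 4n+O(1)$ and $x_3\le x_2\le n$, the sum of squares $\sum_{j\ge 3}x_j^2$ is maximized by making coordinates as unequal as possible, but the per-prefix constraints $\sum_{i=1}^k x_i\le 2n+6k-16$ cap how much mass can pile onto the first few coordinates. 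Carefully tracking this shows the leading $n^3$ arises from $(x_1+x_2)\cdot(\text{bounded tail of squares})$ contributing $n\cdot n^2$ at most, and I would verify the coefficient is exactly $1$ by comparing against the extremal degree sequence $(n-1,n-1,4,4,\dots,4,3,3)$ suggested by Figure~\ref{fig1}.

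\medskip

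The remaining (non-leading) contribution is $\sum_{3\le i<j}x_ix_j^2$, and here I would show it is $O(n^2)$. The mechanism is that, after the two dominant vertices are removed, the prefix constraint forces $\sum_{i=3}^k x_i\le 2n+6k-16-(x_1+x_2)$; in the regime where $x_1,x_2$ are $\Theta(n)$ this budget for the remaining coordinates grows only linearly, and moreover the individual $x_i$ for $i\ge 3$ cannot all be large because $\sum_{i\ge 3}x_i=O(n)$. A clean way to bound $\sum_{3\le i<j}x_ix_j^2\le (\sum_{i\ge 3}x_i)\cdot(\max_{j\ge 3}x_j)\cdot(\sum_{j\ge 3}x_j)$ is too lossy, so instead I would bound $\sum_{3\le i<j}x_ix_j^2\le x_3\sum_{i\ge 3}\sum_{j>i}x_j^2\le x_3(\sum_{j\ge 3}x_j^2)\le x_3\cdot x_3\sum_j x_j$, and then use that the prefix bound at $k=3$ gives $3x_3\le x_1+x_2+x_3\le 2n+6\cdot 3-16=2n+2$, hence $x_3=O(n)$ — but this only gives $O(n^3)$, so the real work is showing $x_3$ must actually be $o(n)$ or that the full tail sum of squares is $O(n)$. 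This is the step I expect to be the main obstacle: extracting the precise second-order behavior requires showing that only two coordinates can be of size $\Theta(n)$ while the rest are forced down by the constraints, so that $\sum_{i\ge 3}x_i^2=O(n)$ rather than merely $O(n^2)$.

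\medskip

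To handle that obstacle I would argue by a smoothing/exchange argument: if three or more coordinates were simultaneously of order $cn$, the prefix constraint $\sum_{i=1}^3 x_i\le 2n+2$ would already be violated for $c$ bounded away from $0$, so at most two coordinates are $\Omega(n)$ and all others satisfy $x_i\le 2n/(i-2)+O(1)$ via the telescoped prefix bounds; summing $\sum_{i\ge 3}x_i^2$ against this harmonic-type decay yields $O(n)$ for the tail of squares, which when multiplied by $x_1+x_2=O(n)$ recovers the leading $n^3$ and confines everything else to $O(n^2)$. Combining the leading block (exactly $n^3$ up to lower order) with these $O(n^2)$ error terms gives $S_n(x_1,\dots,x_n)\le n^3+O(n^2)$, completing the proof of Theorem~\ref{bound}, and hence of Theorem~\ref{4}.
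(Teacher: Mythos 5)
Your overall strategy---isolate the contribution of the largest coordinates and show the rest is $O(n^2)$---is in the right spirit, but two of your key intermediate claims are false, and without them the argument does not close. First, it is not true that at most two coordinates can be $\Omega(n)$: for $n$ divisible by $3$ the point $x_1=x_2=x_3=2n/3$, $x_4=\cdots=x_n=0$ satisfies every constraint defining $A_n$ (the prefix bound at $k=3$ reads $2n\le 2n+2$), so three coordinates of order $n$ are perfectly feasible. Consequently your second claim, that $\sum_{i\ge 3}x_i^2=O(n)$, also fails: in that example $\sum_{i\ge3}x_i^2\ge 4n^2/9$. More generally, the telescoped prefix bounds only give $x_i\le 2n/i+6$, and since $\sum_i 1/i^2$ converges, summing the squares of such a harmonic-type sequence gives $\sum_{i\ge3}x_i^2=\Theta(n^2)$, not $O(n)$; the squares simply do not decay fast enough to save a factor of $n$. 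With only $\sum_{j\ge2}x_j^2=O(n^2)$ available (and with a constant strictly larger than $1$ in front of $n^2$), the ``dominant block'' $x_1\sum_{j\ge2}x_j^2+x_2\sum_{j\ge3}x_j^2$ is only bounded by a constant strictly larger than $1$ times $n^3$, so your argument as written cannot recover the coefficient $1$ in the leading term, which is the whole point of the theorem.

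The idea you are missing---and the way the paper obtains the sharp constant without knowing how the mass is distributed among the large coordinates---is the inequality $S_m(x_1,\dots,x_m)\le (t/2)^3$ for any nonincreasing nonnegative reals with sum $t$ (Lemma~\ref{lm6}, proved by induction on $m$ together with a convexity argument in the last variable). The paper first runs exchange/smoothing arguments on a maximizer of $S_n$ over $A_n$ (Lemmas~\ref{lm4} and~\ref{cases}) to reduce to three cases: $x_1=n$, where the prefix constraint forces $x_3\le 3$ and a direct computation works; $x_2\le n/18$, where $S_n\le \frac{x_2}{2}(6n-12)^2=n^3+O(n^2)$; or all coordinates beyond a bounded index $k\le 11664$ are at most $6$. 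In the last case the prefix constraint gives $\sum_{i\le k}x_i\le 2n+O(1)$, and Lemma~\ref{lm6} applied to this prefix yields $S_k\le (n+O(1))^3=n^3+O(n^2)$ regardless of whether the mass sits on two coordinates of size about $n$ or on three of size $2n/3$; the cross terms and the tail are then $O(n^2)$ because the remaining entries are bounded by $6$. If you want to salvage your approach, you need a replacement for the false tail estimate, and some form of Lemma~\ref{lm6} (or an equivalent majorization statement controlling $\sum_{i<j}x_ix_j^2$ by the first prefix sum alone) appears unavoidable.
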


Before proving Theorem~\ref{bound} we need the following lemmas.

\begin{lemma}\label{lm4}
Let $n\geq 3$ and $(x_1, x_2, \dots, x_n)\in A_n$ be a point maximizing $S_n$ over $A_n$.%such that $S_n(x_1, x_2, \dots, x_n)$ is the maximum value of $S_n$ in $A_n$. 
Then $x_1-x_2\leq 1$.
\end{lemma}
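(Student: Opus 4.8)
The plan is to argue by contradiction using a local exchange (compression) argument. Suppose $(x_1,\dots,x_n)\in A_n$ maximizes $S_n$ but $x_1-x_2\geq 2$. I would consider the perturbed vector $x'=(x_1-1,\,x_2+1,\,x_3,\dots,x_n)$, obtained by shifting one unit of degree from the first coordinate to the second, and show that $x'\in A_n$ while $S_n(x')>S_n(x)$, contradicting maximality.

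First I would verify feasibility, i.e. $x'\in A_n$. Integrality and the box constraint $n\geq x_1'\geq x_2'\geq\cdots$ are immediate: the assumption $x_1-x_2\geq 2$ gives $x_1-1\geq x_2+1$, and $x_2+1\leq x_1-1\leq n$, while $x_2+1\geq x_3$ follows from $x_2\geq x_3$. The crucial observation is that the exchange preserves every prefix sum $\sum_{i=1}^k x_i$ for $k\geq 2$, since both altered coordinates lie inside such a window, whereas for $k=1$ the prefix sum only decreases. Hence all the constraints $\sum_{i=1}^k x_i\leq 2n+6k-16$ and $\sum_{i=1}^n x_i\leq 6n-12$ continue to hold for $x'$.

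Next I would compute the change in the objective. Writing $Q=\sum_{m=3}^n x_m^2$, the only terms of $S_n=\sum_{i<j}x_ix_j^2$ that involve the first two coordinates are $x_1x_2^2+(x_1+x_2)Q$. Since the exchange leaves $x_1+x_2$ and $Q$ unchanged, and leaves every term among positions $\geq 3$ untouched, the entire increment reduces to
$$S_n(x')-S_n(x)=(x_1-1)(x_2+1)^2-x_1x_2^2=x_1(2x_2+1)-(x_2+1)^2.$$
Using $x_1\geq x_2+2$, this is at least $(x_2+2)(2x_2+1)-(x_2+1)^2=x_2^2+3x_2+1>0$, which is the desired strict increase.

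The computation is elementary, so there is no genuine obstacle here; the one point requiring care is the feasibility verification, specifically confirming that shifting mass from coordinate $1$ to coordinate $2$ keeps the vector weakly decreasing and respects the partial-sum constraints for \emph{every} $k$. Because the move is local, between adjacent coordinates, and sum-preserving on every prefix of length at least $2$, these checks go through cleanly, and the strict positivity of the increment forces $x_1-x_2\leq 1$ at any maximizer.
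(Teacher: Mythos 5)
Your proof is correct and follows essentially the same exchange argument as the paper: shift one unit of degree from $x_1$ to $x_2$, verify the perturbed vector stays in $A_n$, and show the objective strictly increases. Your increment $x_1(2x_2+1)-(x_2+1)^2$ is in fact the accurate one (the paper's displayed difference $(x_1-1)(2x_2+1)$ drops a $-x_2^2$ term, though positivity and hence the conclusion are unaffected), and your feasibility check is more explicit than the paper's.
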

\begin{proof}
Suppose by contradiction that $x_1-x_2\geq 2$.  Define the sequence $(y_1, y_2, \dots, y_n)\in A_n$ as $y_1=x_1-1$, $y_2=x_2+1$ and $y_i=x_i$ for all $i\neq 1,2$.  Then 
\begin{align*}
S_n(y_1, y_2, \dots, y_n)&=\sum_{i=1}^{n-1}\sum_{j=i+1}^{n}y_iy_j^2=(x_1-1)((x_2+1)^2+x_3^2+\cdots+x_n^2)+(x_2+1)(x_3^2+\cdots+x_n^2)\\&\hspace{12pt}+x_3(x_4^2+\cdots+x_n^2)+\cdots+x_{n-1}x_n^2\\&=x_1(x_2^2+\cdots+x_n^2)+x_2(x_3^2+\cdots+x_n^2)+\cdots+x_{n-1}x_n^2+(x_1-1)(2x_2+1).
\end{align*}
Thus $S_n(y_1, y_2, \dots, y_n)-S_n(x_1, x_2, \dots, x_n)=(x_1-1)(2x_2+1)>0$, which is a contradiction. 
\end{proof}

\begin{lemma}\label{n}
Let $n\geq 3$ and $(x_1, x_2, \dots, x_n)\in A_n$ be a point maximizing $S_n$ over $A_n$.
If $x_1=n$, then $S_n(x_1, x_2, \dots, x_n)\leq n^3+O(n^2)$. 
\end{lemma}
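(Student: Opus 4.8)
The plan is to exploit the fact that the point in question is a maximizer of $S_n$ over $A_n$, which makes the structural constraint from Lemma~\ref{lm4} available, and to show that this constraint is simply incompatible with the hypothesis $x_1 = n$. Rather than estimating $S_n$ directly, I would argue that a maximizer with $x_1 = n$ cannot exist, so that the asserted bound holds (vacuously) for every maximizer satisfying the hypothesis.

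First I would invoke Lemma~\ref{lm4}: since $(x_1, \dots, x_n)$ maximizes $S_n$ over $A_n$, we have $x_1 - x_2 \le 1$. Under the hypothesis $x_1 = n$ this forces $x_2 \ge n - 1$. On the other hand, membership in $A_n$ includes the partial-sum inequality at $k = 2$, namely $x_1 + x_2 \le 2n + 6\cdot 2 - 16 = 2n - 4$; with $x_1 = n$ this gives $x_2 \le n - 4$. Combining the two bounds yields $n - 1 \le x_2 \le n - 4$, which is impossible. Hence no maximizer of $S_n$ over $A_n$ can have $x_1 = n$, the implication in the statement is vacuously true, and in particular $S_n(x_1, \dots, x_n) \le n^3 + O(n^2)$ holds.

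I expect essentially no obstacle here: the only things to check are that Lemma~\ref{lm4} applies verbatim (it does, since its sole hypothesis is maximality) and that the $k = 2$ inequality is part of the definition of $A_n$ (it is, as $A_n$ imposes $\sum_{i=1}^k x_i \le 2n + 6k - 16$ for every $k \in \{1, \dots, n\}$ and $n \ge 2$). It is worth noting that the two inequalities conflict for all $n \ge 2$, so no largeness assumption on $n$ is needed.

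The only genuine choice is whether to argue by this contradiction or to bound $S_n$ directly. A direct estimate would start from the decomposition $S_n = \sum_{j=2}^n x_j^2 \big(\sum_{i<j} x_i\big)$ and try to show that the partial-sum constraints confine all but two coordinates of a near-extremal sequence to be $O(1)$, so that the dominant contribution is the single term $x_1 x_2^2 = n^3 + O(n^2)$. The hard part of that route would be ruling out sequences that spread moderate mass (of order $n$) across several of the coordinates $x_2, x_3, \dots$, which requires a convexity/rearrangement (concentration) argument to show that loading everything into $x_2$ is optimal. Using maximality through Lemma~\ref{lm4} bypasses this entirely, which is why I would prefer the contradiction.
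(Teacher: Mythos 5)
Your deduction is formally valid against the literal definition of $A_n$: Lemma~\ref{lm4} does give $x_2\geq x_1-1=n-1$ for a maximizer, and the $k=2$ instance of the partial-sum constraint gives $x_1+x_2\leq 2n-4$, so under the definitions as printed the case $x_1=n$ is indeed empty and the lemma holds vacuously. However, this is not a proof of the intended statement; it is a symptom of a defect in the paper that your argument inherits. The $k=2$ (and $k=1$) instances of the constraint $\sum_{i=1}^k x_i\leq 2n+6k-16$ are not actually valid for planar degree sequences: the proof of Lemma~\ref{lm1} uses $\sum_{v\in S}d_{G'}(v)\leq 6k-12$, which requires $k\geq 3$, and the extremal graph of Figure~\ref{fig1} itself has two vertices of degree $n-1$, so $x_1+x_2=2n-2>2n-4$. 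In other words, the set $A_n$ as written fails to contain the very degree sequences the reduction needs it to contain, and the correct repair is to impose the constraint only for $k\geq 3$ (with, say, $x_1\leq n-1$ and $x_1+x_2\leq 2n-2$ in its place). Under that repaired definition your contradiction evaporates: $x_1$ near $n$ together with $x_2\in\{n-1,n\}$ is perfectly compatible with all remaining constraints, and the case is genuinely non-vacuous.

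The content the lemma is meant to carry — and what the paper's own proof supplies — is a direct estimate in that non-vacuous case: from Lemma~\ref{lm4} one gets $x_2\geq n-1$, the $k=3$ constraint $x_1+x_2+x_3\leq 2n+2$ then forces $x_3\leq 3$, hence $x_i\leq 3$ for all $i\geq 3$, and expanding $S_n$ gives $x_1x_2^2\leq n^3$ as the only cubic term, with everything else contributing $O(n^2)$. That computation survives the repair of $A_n$ because it only uses the $k=3$ constraint. So while you have correctly (and usefully) exposed an inconsistency in the definition of $A_n$, your proof establishes the lemma only by exploiting that inconsistency; it does not bound $S_n$ in the regime the lemma exists to handle, and it would fail once the definition is corrected. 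You should either carry out the direct estimate above or, if you want to keep the vacuity argument, first flag that it shows $A_n$ is wrongly defined and that the theorem's reduction to $A_n$ is broken as stated.
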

\begin{proof}
By Lemma \ref{lm4}, we have $x_2 \in \{n, n-1\}$.
Since $x_1+x_2+x_3\leq 2n+2$, we see $x_3\leq 3$. 
Therefore,
\begin{align*}
S_n(x_1, x_2, \dots, x_n)&=\sum_{i=1}^{n-1}\sum_{j=i+1}^{n}x_ix_j^2\leq n(n^2+
\underbrace{3^2+3^2+\cdots+3^2}_{n-2\  \text{terms}})\\&\hspace{12pt}+n (\underbrace{3^2+3^2+\cdots+3^2}_{n-2\  \text{terms}})+3(\underbrace{3^2+3^2+\cdots+3^2}_{n-3\  \text{terms}})+\cdots+3(3^2)\\&=n^3+9n(n-2)+9n(n-2)+27(n-3)+27(n-4)+\cdots+27\\&=n^3+18n(n-2)+\frac{27}{2}(n-3)(n-2)=n^3+O(n^2). \qedhere
\end{align*}
\end{proof}

\begin{lemma}\label{lm5}
Let $n\geq 3$ and $(x_1 ,x_2 ,\dots, x_n) \in A_n$. If $x_2\leq \frac{n}{18}$, then $S_n(x_1, x_2, \dots, x_n) \leq n^3+O(n^2)$.
\end{lemma}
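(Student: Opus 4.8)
The plan is to exploit the partial-sum constraints defining $A_n$ through the exact identity obtained by grouping the terms of $S_n$ according to their largest index. Writing $P_k=\sum_{i=1}^{k}x_i$, one has
\[
S_n(x_1,\dots,x_n)=\sum_{j=2}^{n} x_j^2\,P_{j-1},
\]
since for each $j$ the terms $x_ix_j^2$ with $i<j$ contribute $x_j^2(x_1+\cdots+x_{j-1})$. The definition of $A_n$ gives $P_{j-1}\le 2n+6(j-1)-16<2n+6j$, so I would bound
\[
S_n\le 2n\sum_{j=2}^{n}x_j^2+6\sum_{j=2}^{n}j\,x_j^2,
\]
and estimate the two sums separately.

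For the first sum I would use the hypothesis $x_2\le n/18$ together with monotonicity: since $x_j\le x_2$ for $j\ge 2$, we get $x_j^2\le x_2x_j$, hence $\sum_{j\ge 2}x_j^2\le x_2\sum_{j\ge 2}x_j\le \frac{n}{18}(6n-12)\le \frac{n^2}{3}$, where $\sum_{i}x_i\le 6n-12$ is the final constraint in $A_n$. This yields $2n\sum_{j\ge 2}x_j^2\le \tfrac{2}{3}n^3$, which is the only term of order $n^3$ and, crucially, carries a coefficient strictly below $1$.

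For the second sum the point is that it is only of order $n^2$. Since $x_j$ is the smallest of $x_1,\dots,x_j$, we have $jx_j\le P_j\le 2n+6j\le 8n$, so $jx_j^2=(jx_j)x_j\le 8n\,x_j$ and therefore $\sum_{j}jx_j^2\le 8n\sum_j x_j\le 48n^2$. Hence $6\sum_j jx_j^2=O(n^2)$, and combining the two estimates gives $S_n\le \tfrac23 n^3+O(n^2)\le n^3+O(n^2)$, as required.

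The step I expect to require the most care is recognizing that one must \emph{not} bound the degrees pointwise and substitute into $S_n$: the natural pointwise bound $x_k\le 2n/k+6$ coming from $kx_k\le P_k$ is not simultaneously attainable across all $k$ (it would force $\sum_k x_k\gg n$), so feeding it termwise into $\sum_{i<j}x_ix_j^2$ overshoots $n^3$. The identity $S_n=\sum_j x_j^2P_{j-1}$ is precisely what lets the partial-sum constraint act globally rather than coordinate-by-coordinate. The second delicate point is that the $\Theta(n^3)$ contribution genuinely comes from the first sum, so the hypothesis $x_2\le n/18$ must be invoked there to keep its coefficient below $1$; in fact any bound of the form $x_2\le cn$ with $c\le 1/12$ would suffice for this lemma, so the stated $n/18$ leaves comfortable slack.
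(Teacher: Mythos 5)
Your proof is correct, but it takes a genuinely different route from the paper's. The paper's argument is a two-line computation that never touches the partial-sum constraints of $A_n$: since every squared factor $x_j^2$ in $S_n$ has $j\geq 2$, one bounds $x_j^2\leq x_2x_j$ termwise, giving
\[
S_n\leq x_2\sum_{i<j}x_ix_j\leq \frac{x_2}{2}\Bigl(\sum_i x_i\Bigr)^2\leq \frac{n}{36}(6n-12)^2\leq n^3,
\]
using only the hypothesis and the global bound $\sum_i x_i\leq 6n-12$. You instead regroup as $S_n=\sum_j x_j^2P_{j-1}$ and feed in the constraints $P_{j-1}\leq 2n+6(j-1)-16$, splitting off the $O(n^2)$ contribution of $6\sum_j jx_j^2$ via $jx_j\leq P_j\leq 8n$; each step checks out. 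What your approach buys is a better leading constant ($\tfrac23 n^3$ versus the paper's exactly $n^3$) and hence the observed slack (any $x_2\leq n/12$ suffices for you, whereas $n/18$ is exactly what the paper's method needs, which is presumably where that threshold in the lemma statement comes from). What the paper's approach buys is brevity and the fact that it needs none of the planarity-specific partial-sum structure --- only the edge count. Your closing remark about the danger of the pointwise bound $x_k\leq 2n/k+6$ is a sensible caution, though in the end neither your proof nor the paper's comes close to needing it.
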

\begin{proof}
%From Inequality (\ref{1}), we know that $P_5(G)\leq \sum\limits_{i}^{n-1}\sum_{i<j}x_ix_j^2$. Hence
\begin{align*}
S_n(x_1, x_2, \dots, x_n) &\leq\sum\limits_{i}^{n-1}\sum\limits_{j=i+1}^{n}x_ix_j^2\leq \sum\limits_{i=1}^{n-1}\sum\limits_{j=i+1}^{n}x_ix_jx_2=x_2\sum\limits_{i=1}^{n-1}\sum_{j=i+1}^{n}x_ix_j\leq\frac{x_2}{2}\sum\limits_{i,j}x_ix_j\\&=\frac{x_2}{2}\sum\limits_{i}x_i\sum_{j}x_j=\frac{x_2}{2}\left(\sum\limits_{i}x_i\right)^2=\frac{x_2}{2}(6n-12)^2.
\end{align*}
Thus, if $x_2\leq \frac{n}{18}$, then $S_n(x_1, x_2, \dots, x_n)\leq n^3+O(n^2)$.
\end{proof}
We  prove  the following claim, from which Lemma \ref{5} follows.
\begin{claim}\label{cm1}
Let $n\geq 3$ and suppose $(x_1, x_2, \dots, x_n) \in A_n$. 
If $k$ is the smallest integer at least $3$ such that $\sum\limits_{i=1}^{k}x_i=2n+6k-16$, then $x_k\geq 7$ and $x_{k+1}\leq 6$.
\end{claim}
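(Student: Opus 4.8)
The plan is to analyze the defining constraints of $A_n$ at the critical index $k$ and extract the claimed bounds on $x_k$ and $x_{k+1}$ directly from the minimality of $k$ together with the monotonicity $x_1 \geq x_2 \geq \cdots \geq x_n$. Recall that $k$ is the smallest nonnegative integer for which the partial-sum constraint $\sum_{i=1}^k x_i \leq 2n+6k-16$ holds with equality. The governing observation is how the partial sums relate to the right-hand side as the index increments by one: passing from index $k-1$ to $k$ increases the bound $2n+6k-16$ by exactly $6$, while it increases the partial sum by $x_k$.

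First I would establish $x_{k+1} \leq 6$. Since $k$ is the smallest index achieving equality, and since $(x_1,\dots,x_n) \in A_n$ forces $\sum_{i=1}^{k+1} x_i \leq 2n+6(k+1)-16$, I would subtract the equality $\sum_{i=1}^{k} x_i = 2n+6k-16$ from this inequality to obtain $x_{k+1} \leq 6$ immediately. This is the clean direction and requires only the membership constraint at index $k+1$ together with the equality at index $k$.

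Next I would establish $x_k \geq 7$. Here the minimality of $k$ is essential: because $k$ is the \emph{smallest} index at which equality occurs, the constraint at index $k-1$ must be \emph{strict}, i.e. $\sum_{i=1}^{k-1} x_i < 2n+6(k-1)-16 = 2n+6k-22$ (one should handle the boundary case $k=0$ or $k=1$ separately, noting that at $k=0$ the empty sum equals $0$ which cannot meet the bound $2n-16$ for large $n$, so in fact $k \geq 1$). Since the $x_i$ are integers, strictness gives $\sum_{i=1}^{k-1} x_i \leq 2n+6k-23$. Subtracting this from the equality $\sum_{i=1}^{k} x_i = 2n+6k-16$ yields $x_k \geq 7$.

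I expect the main subtlety to be the careful bookkeeping of the boundary cases rather than any genuine difficulty: one must verify that $k \geq 1$ so that the index $k-1$ is meaningful, and that $k \leq n-1$ so that $x_{k+1}$ exists, or else argue that the conclusion $x_{k+1}\leq 6$ is vacuous or trivially handled when $k = n$. Once these edge cases are dispatched, both bounds follow from a single telescoping step applied to consecutive partial-sum constraints, exploiting that the right-hand side $2n+6k-16$ is affine in $k$ with slope $6$ and that integrality upgrades a strict inequality into a gap of at least one.
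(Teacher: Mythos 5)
Your proof is correct and follows essentially the same route as the paper's: strictness of the constraint at index $k-1$ (from minimality of $k$) plus integrality gives $x_k\geq 7$, and comparing the equality at $k$ with the constraint at $k+1$ gives $x_{k+1}\leq 6$. Your extra attention to the boundary cases $k\geq 1$ and $k\leq n-1$ is a minor point of added care that the paper omits.
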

\begin{proof}
Since $\sum\limits_{i=1}^{k} x_i = 2 n + 6k - 16$, we have $\sum\limits_{i=1}^{k-1} x_i + x_k = 2 n + 6 k - 16$. From the definition of $A_n$, $\sum\limits_{i=1}^{k-1} x_i < 2 n + 6 (k - 1) - 16$. Therefore, $2n + 6k - 16 < 2n + 6(k - 1) - 16 + x_k$. Thus, $x_k \geq 7$. Now suppose $x_{k+1} \geq 7$. In that case, $(2n+6k-16)+7 \leq 2n+6(k+1)-16$ which simplifies to $7 \leq 6$, a contradiction. Therefore, $x_{k+1} \leq 6$.
\end{proof}
\begin{lemma}\label{5}
\label{cases}
Let $n\geq 3$ and $(x_1, x_2, \dots, x_n)\in A_n$ be a point maximizing $S_n$ over $A_n$.
One of the following must hold:
\begin{itemize}
\item[$(i)$] $x_1 = n$,
\item[$(ii)$] $x_2 \leq \frac{n}{18}$,
\item[$(iii)$] there exists a $k \leq 11664$ such that $x_i \leq 6$ for $i>k$.
\end{itemize}
%Let $(x_1,x_2,\dots,x_n)\in A_n$ with $x_1<n$ and $x_2>\frac{n}{18}$ such that $S_n(x_1,x_2,\dots,x_n)$ is the maximum value of $S$. Let $k$ be a smallest integer such that $\sum_{i=1}^{k}x_i=2n+6k-16$. Then $x_k\geq \frac{n}{18^2\cdot12}$ and $k$ is a constant.
\end{lemma}
\begin{proof}
Suppose that $(i)$ and $(ii)$ are false, that is $x_1<n$ and $x_2 > \frac{n}{18}$. Then we have to show that $(iii)$ holds.
If there exists an $r\geq 3$ such that $\sum\limits_{i=1}^r x_i = 2n+6r-16,$ then take $k$ to be the smallest such $r$.  Otherwise, let $k$ be the last index such that $x_k$ is not 0.  If $k<n$ in both cases, we have $x_k > x_{k+1}$, either because of Claim~\ref{cm1} or because $x_k >0 = x_{k+1}$.  Additionally, from Claim~\ref{cm1}, we have that $x_i \leq 6$ for $i>k$.  We are going to prove that $k\leq 11664$, hence $k$ satisfies $(iii)$.  

Define $y = (y_1, y_2, y_3, \dots, y_n)$ by $y_1=x_1+1$, $y_k=x_k-1$ and, $y_i=x_i$ for $i\neq 1,k$.  
And note $y \in A_n$. 
We have
\begin{align*}
    S_n(y_1, y_2, \dots, y_n)&=\sum_{i=1}^{n-1}\sum_{j=i+1}^{n}y_iy_j^2 \\
    &= (x_1+1)\left(x_2^2+x_3^2+\cdots+x_{k-1}^2+(x_k-1)^2+x_{k+1}^2+\cdots+x_n^2\right) \\
    &\hspace{12pt} +x_2\left(x_3^2+x_4^2+\cdots+x_{k-1}^2+(x_k-1)^2+x_{k+1}^2+\cdots+x_n^2\right) \\
    &\hspace{12pt} +\cdots+x_{k-1}\left((x_k-1)^2+x_{k+1}^2+\cdots+x_n^2\right) \\
    &\hspace{12pt} +(x_k-1)\left(x_{k+1}^2+\cdots+x_n^2\right)+\cdots+x_{n-1}x_n^2 \\
    &= \sum\limits_{i=1}^{n-1}\sum_{j=i+1}^{n}x_ix_j^2+(1-2x_k)(1+x_1+x_2+x_3+\cdots+x_{k-1}) \\
    &\hspace{12pt} +\left(x_2^2+x_3^2+\cdots+x_n^2\right) - \left(x_{k+1}^2+x_{k+2}^2+\cdots+x_n^2\right).
\end{align*}
Thus, $S_n(y_1, y_2, \dots, y_n)-S_n(x_1, x_2, \dots, x_n)=(x_2^2+x_3^2+\cdots+x_k^2)-(2x_k-1)(1 + x_1 + x_2 + \cdots + x_{k-1})$. Since $S_n(y_1, y_2, \dots, y_n)\leq S_n(x_1,x_2,\dots,x_n)$, $x_2>\frac{n}{18}$ and $\sum\limits_{i=1}^{k}x_i\leq 6n$, we have 
\begin{displaymath}
\frac{n^2}{18^2}< (x_2^2 + x_3^2 + x_4^2 + \cdots+x_k^2)\leq (2x_k-1)(1 + x_1 + x_2 + x_3 + \cdots + x_{k-1})<12nx_k.
\end{displaymath}
Therefore, $x_k>\frac{n}{18^2\cdot12}$. Hence we have $6n\geq \sum\limits_{i=1}^kx_i\geq k\frac{n}{18^2\cdot6}$ and $k \leq  (18\cdot 6)^2 = 11664$.
\end{proof}

\begin{lemma}
\label{lm6}
Let $m\geq 2$ be an integer and $x_1, x_2, \dots, x_m$ be reals such that $x_1 \geq x_2 \geq \cdots \geq x_m \geq 0$. Put $t := \sum\limits_{i=1}^m x_i$, then $S_m(x_1,x_2,x_3,\ldots,x_m) \leq (t/2)^3$.
\end{lemma}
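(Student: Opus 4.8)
The plan is to induct on $m$, exploiting the fact that the smallest variable $x_m$ enters $S_m$ only as a squared term. Since $x_m$ carries the last index, it never plays the role of the linear factor $x_i$ paired with a later index, so it peels off cleanly:
\[
S_m(x_1,\dots,x_m) = S_{m-1}(x_1,\dots,x_{m-1}) + x_m^2\sum_{i=1}^{m-1}x_i.
\]
Writing $t' := \sum_{i=1}^{m-1}x_i = t - x_m$, the tuple $x_1\ge\cdots\ge x_{m-1}\ge 0$ still satisfies the hypotheses, so for $m\ge 3$ the inductive hypothesis gives $S_{m-1}(x_1,\dots,x_{m-1})\le (t'/2)^3$. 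The whole problem then reduces to the one-variable inequality
\[
\left(\frac{t'}{2}\right)^3 + x_m^2\, t' \le \left(\frac{t'+x_m}{2}\right)^3 = \left(\frac{t}{2}\right)^3 .
\]

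For the base case $m=2$ I would check $x_1x_2^2\le\big((x_1+x_2)/2\big)^3$ directly for $x_1\ge x_2\ge 0$: substituting $x_1=x_2+d$ with $d\ge 0$, both sides coincide at $d=0$, while the right-hand side has the strictly larger derivative in $d$ throughout, so the inequality holds with equality exactly when $d=0$, i.e. $x_1=x_2$. For the inductive step, expanding the displayed reduction and cancelling leaves
\[
\left(\frac{t'+x_m}{2}\right)^3 - \left(\frac{t'}{2}\right)^3 - x_m^2 t' = \frac{x_m}{8}\left(3t'^2 - 5t'x_m + x_m^2\right),
\]
so it suffices to show $q(x_m):=x_m^2 - 5t'x_m + 3t'^2\ge 0$ in the relevant range.

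The crucial input is the ordering. Because $x_1\ge\cdots\ge x_m$, we have $t' = \sum_{i=1}^{m-1}x_i \ge (m-1)x_m$, hence $x_m\le t'/(m-1)\le t'/2$ once $m\ge 3$. The quadratic $q$ has its vertex at $5t'/2$, so it is decreasing on $[0,t'/2]$, and its minimum there is $q(t'/2)=\tfrac34 t'^2\ge 0$, which closes the induction.

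The main obstacle is exactly this appeal to the ordering, which is also the reason the statement is true at all: the smaller root of $q$ sits near $0.70\,t'$, so without the bound $x_m\le t'/2$ the factor $q(x_m)$ would go negative, and indeed $(t/2)^3$ is false if a large variable is permitted to be the squared one. The hypothesis $x_1\ge\cdots\ge x_m$ guarantees that only comparatively small variables get squared, which is precisely what makes $x_m\le t'/(m-1)$ available in the step. I would be careful to treat $m=2$ separately (there only $x_m\le t'$ holds, and the quadratic argument does not apply) and to record that equality propagates from the $m=2$ case $x_1=x_2$.
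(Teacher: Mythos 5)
Your proof is correct and follows essentially the same route as the paper: the same peeling decomposition $S_m = S_{m-1} + x_m^2(t-x_m)$, the same induction with a separately handled base case $m=2$, and the same crucial use of the ordering to restrict the range of $x_m$. The only difference is cosmetic: where the paper bounds $g(x)=(t-x)^3/8+(t-x)x^2$ via convexity on $[0,t/3]$ (using $x_m\le t/m\le t/3$), you expand the target difference into $\frac{x_m}{8}\left(x_m^2-5t'x_m+3t'^2\right)$ and verify the quadratic is nonnegative on $[0,t'/2]$ (using $x_m\le t'/(m-1)\le t'/2$).
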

\begin{proof}
%We may assume that $t>0$, since otherwise every entry of the tuple is 0 and there is nothing to prove. Since $S_3$ is a homogeneous polynomial of degree 3, then it is enough to prove the special case where $t=1$, otherwise we may replace every $x_i$ with $\frac{x_i}{t}$. 
We are going to proceed by induction on $m$.  First, we show the relation holds for $m=2$.
Let $x_1, x_2$ be real numbers such that $x_1\geq x_2 \geq 0$ and $t=x_1+x_2$, which gives $x_2=t-x_1$.  Hence, $S_2(x_1,x_2) = S(x_1,t-x_1) = x_1(t-x_1)^2$.

Let $f(x) = x(t-x)^2$.  We have $f'(x) = t^2 - 4 t x + 3 x^2 = (t-x)(t-3x)$, which is negative in $[t/2, t]$.  Since $x_1 \geq t/2$, we have  
\begin{align*}
    S_2(x_1,x_2) = f(x_1) \leq \max_{t/2 \leq x \leq t}f(x) = f\left(t/2\right) = \frac{t^3}{8}.
\end{align*}
Therefore, the lemma holds for $m=2$.

Now suppose $m\geq 3$ is such that the lemma is true for $m-1$, and let $x_1, x_2, \dots, x_m$ be real numbers such that $x_1\geq x_2\geq\cdots \geq x_m\geq 0$ and $\sum_{i=1}^mx_i = t$. 
By the induction hypothesis, we have $S_{m-1}(x_1, x_2, \dots, x_{m-1}) \leq \left(\frac{t-x_m}{2} \right)^3.$  
%We have that $x_m\leq \frac{t}{m}$, hence $x_m\leq \frac{t}{4}$.
Thus, we get
\begin{align*}
    S_m(x_1,x_2,\dots,x_m) & =S_{m-1}(x_1,x_2,\dots,x_{m-1})+(x_1+x_2+x_3+\cdots+x_{m-1})x_m^2 \\&=S_{m-1}(x_1,x_2,\dots,x_{m-1})+(t-x_m)x_m^2 \leq \frac{(t-x_m)^3}{8}+(t-x_m)x_m^2. 
    % \\ &=\frac{1}{8}(t^3-3t^2x_m+3tx_m^2-x_m^3+8tx_m^2-8x^3)\\&=\frac{t^3}{8}(-3t^2x_m+11tx_m^2-9x_m^3)\\ &\leq  \frac{t^3}{8}(-3t^2x_m+11tx_m^2)=\frac{t^3}{8}tx_m(11x_m-3t)\\ & \leq \frac{t^3}{8} 
\end{align*}

Let $g(x) = \frac{(t-x)^3}{8}+(t-x)x^2$,  then $g''(x) = \frac{11 t - 27 x}{4}$.  
We have that $x_m\leq \frac{t}{m} \leq  \frac{t}{3}$, and $g''(x) \geq \frac{2t}{4} \geq 0$, for $x\leq t/3$.  Thus $g$ is convex in $[0,t/3]$, therefore 
\begin{align*}
    S_m(x_1,x_2,\dots,x_m) \leq g(x_m) \leq \max_{0\leq x \leq t/3} g(x) = \max\left\{g(0),g\left(t/3\right)\right\} = \max\left\{\frac{t^3}{8},\frac{t^3}{9}\right\} = \frac{t^3}{8}.\tag*{\qedhere} 
\end{align*}
\end{proof}

Now we are able to prove Theorem~\ref{bound}.

\begin{proof}[Proof of Theorem~\ref{bound}]
Let $n$ be sufficiently large and take $(x_1, x_2, \dots, x_n)$ maximizing $S_n$ over $A_n$.  
By Lemma~\ref{cases} we have three possible cases.

If $x_1 = n$, then $S_n(x_1, x_2, \dots, x_n) \leq n^3 + O(n^2)$ by Lemma~\ref{n}.

If $x_2 \leq \frac{n}{18}$, then $S_n(x_1, x_2, \dots, x_n) \leq n^3 + O(n^2)$ by Lemma~\ref{lm5}.

If there exists a $k$ satisfying $(iii)$ in Lemma~\ref{lm5}, then we have that $\sum\limits_{i=1}^k x_i \leq 2n+6k = 2n + O(1)$.  Hence by Lemma~\ref{lm6}, we have $S_k(x_1,x_2,\dots,x_k) \leq \left(\dfrac{2n+O(1)}{2}\right)^3 = n^3 + O(n^2).$
Therefore, together with the fact that $x_i \leq 6$ for $i>k$, 
\begin{align*}
S_n(x_1,x_2,\dots,x_n) &= \sum_{i=1}^{n-1}\sum_{j=i+1}^n x_ix_j^2 \leq   \sum_{i=1}^{n-1}\left(\sum_{j=i+1}^k x_ix_j^2 + \sum_{j=k+1}^n x_ix_j^2\right) \\ &=  \sum_{i=1}^{k-1}\sum_{j=i+1}^k x_ix_j^2 + \sum_{i=k}^{n-1}\sum_{j=i+1}^k x_ix_j^2 + \sum_{i=1}^{n-1}\sum_{j=k+1}^n x_ix_j^2 \\ &\leq S_k(x_1,x_2,\dots,x_k) + O(n^2) + 36n\sum_{i=1}^{n-1}x_i \\ &\leq  n^3 + O(n^2). \qedhere
\end{align*}
\end{proof}
\section{ Conjectures and Concluding Remarks.}
Let $P_{k+1}$ be a path of length $k$. We propose the following conjectures of the asymptotic values of $f(n,P_{2\ell+1})$ and $f(n,P_{2\ell+2})$.  %The extremal graph is the $n-$vertex planar graph given in Figure \ref{fig1}.  
\begin{conjecture}\label{gff}
For paths with even length,  $f(n,P_{2\ell+1})=4\ell\left(\frac{n}{\ell}\right)^{\ell+1}+O(n^{\ell}).$
\end{conjecture}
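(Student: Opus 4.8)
The plan is to match a construction against an upper bound whose analysis extends the $P_4$ argument. For the lower bound, generalizing the graph of Figure~\ref{fig1}, I would take $\ell$ hubs $h_1,\dots,h_\ell$ arranged in a cycle and, between each cyclically consecutive pair $h_i,h_{i+1}$ (indices modulo $\ell$), insert a set $B_i$ of about $n/\ell$ vertices each adjacent to exactly $h_i$ and $h_{i+1}$. Drawing every $B_i$ as an outward bulge along the edge $h_ih_{i+1}$ keeps the graph planar, and each hub then has degree about $2n/\ell$, so the $\ell$ hub degrees sum to about $2n$, saturating Lemma~\ref{lm1}. A copy of $P_{2\ell}$ whose $\ell$ odd-position vertices are exactly the hubs must traverse them along a Hamiltonian path of the hub-cycle (one cyclic edge is skipped): its $\ell-1$ interior even vertices are forced into the $\ell-1$ traversed books, giving about $(n/\ell)^{\ell-1}$ choices, while each of its two endpoints ranges over a union of two books, giving about $(2n/\ell)^2$ choices. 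Summing over the $\ell$ choices of skipped edge yields about $\ell\cdot(2n/\ell)^2(n/\ell)^{\ell-1}=4\ell(n/\ell)^{\ell+1}$ copies.

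For the upper bound I would begin exactly as for $P_4$: fixing the odd-position vertices $u_i=v_{2i-1}$ confines $v_0$ to $N(u_1)$, each $v_{2i}$ to $N(u_i)\cap N(u_{i+1})$, and $v_{2\ell}$ to $N(u_\ell)$, so that
\[
P_{2\ell}(G)\le \frac12\sum_{\substack{u_1,\dots,u_\ell\\ \text{distinct}}} d(u_1)\,d(u_\ell)\prod_{i=1}^{\ell-1} d(u_i,u_{i+1}).
\]
The essential new difficulty is that, unlike for $\ell=2$, one must not replace $d(u_i,u_{i+1})$ by $\min\{d(u_i),d(u_{i+1})\}$: in the extremal graph $d(h_i,h_{i+1})\approx\tfrac12 d(h_i)$, so for $\ell=3$ the $\min$-relaxation, evaluated on the extremal degree sequence, already gives $\tfrac{16}{27}n^4$, four times the conjectured value. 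Hence the purely degree-sequence optimization over $A_n$ that sufficed for $P_4$ cannot reach the sharp constant, and the common-neighbour data must be kept.

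I would therefore isolate the bounded set of high-degree vertices. Calling a vertex a \emph{hub} when its degree exceeds $\delta n$, Lemma~\ref{lm1} bounds the number of hubs by about $2/\delta$ and their degree sum by $2n+O(1)$. The new planar input is that consecutive hubs cannot all share large neighbourhoods: only $O(1)$ non-hubs can be adjacent to three or more hubs (else a $K_{3,3}$ appears), so all but $O(1)$ non-hubs lie in at most two hub neighbourhoods, whence $\sum_{i<j} d(h_i,h_j)\le n+O(1)$. Writing $a_i=d(h_i)$ and $b_{ij}=d(h_i,h_j)$, the leading term then reduces to maximizing $\tfrac12\sum a_{i_1}a_{i_\ell}\prod_{j} b_{i_ji_{j+1}}$ over the polytope $\sum_i a_i\le 2n$, $\sum_{i<j} b_{ij}\le n$, $b_{ij}\le\min\{a_i,a_j\}$; the balanced cyclic point ($k=\ell$, all $a_i=2n/\ell$, with $b=n/\ell$ on a single $\ell$-cycle and $0$ otherwise) attains $4\ell(n/\ell)^{\ell+1}$.

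The hard part is proving that this point is optimal and controlling the discarded terms. Tuples containing a non-hub are negligible only after $\delta\to 0$, so even the leading constant---let alone the $O(n^\ell)$ error---requires treating medium-degree vertices uniformly rather than by a single threshold. For the optimization I expect a smoothing and exchange argument in the spirit of Lemmas~\ref{lm4} and~\ref{lm6} to force the hub degrees to be equal and the weights $b_{ij}$ to be supported on one cycle; establishing this for every $\ell$, and in particular ruling out configurations with $k>\ell$ hubs, is the principal obstacle and the reason the statement remains a conjecture.
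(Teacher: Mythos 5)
The statement you are addressing is a \emph{conjecture}: the paper offers no proof of it, only the lower-bound construction (the balanced blow-up of a maximum independent set of a $2\ell$-cycle in Figure~\ref{fig2}, with Figure~\ref{fig1} handling $\ell=2$). Your lower bound is, up to relabelling, exactly that construction --- your $\ell$ hubs are the unblown vertices of the $2\ell$-cycle and your sets $B_i$ are the blown-up independent set --- and your count $\ell\cdot(2n/\ell)^2(n/\ell)^{\ell-1}=4\ell(n/\ell)^{\ell+1}$ is correct: a $P_{2\ell}$ using all $\ell$ hubs in odd positions determines a Hamiltonian path of the hub-cycle ($\ell$ choices), the $\ell-1$ interior even vertices are confined to the traversed blobs, and the two endpoints range over hub neighbourhoods of size about $2n/\ell$. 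This part is fine and consistent with the paper (it specializes to $n^3$ for $\ell=2$, matching Theorem~\ref{4}).

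For the upper bound, however, there is a genuine and (to your credit) self-acknowledged gap. Your preliminary observations are sound and worth recording: the generalization of the starting inequality to $\frac12\sum d(u_1)d(u_\ell)\prod_i d(u_i,u_{i+1})$ is the right analogue; the computation showing that replacing $d(u_i,u_{i+1})$ by $\min\{d(u_i),d(u_{i+1})\}$ already loses a factor of $4$ at $\ell=3$ correctly explains why the pure degree-sequence optimization over $A_n$ that proves Theorem~\ref{bound} cannot yield the conjectured constant for $\ell\ge 3$; and the $K_{3,3}$-freeness argument giving $\sum_{i<j}d(h_i,h_j)\le n+O(1)$ is a correct new planar input. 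But the proposal stops precisely where the difficulty begins: you do not prove that the balanced cyclic point maximizes $\frac12\sum a_{i_1}a_{i_\ell}\prod b_{i_ji_{j+1}}$ over the stated polytope (ruling out more than $\ell$ hubs, unequal degrees, or $b$ supported off a single cycle), and you do not control the contribution of tuples involving medium-degree vertices, where a single threshold $\delta n$ demonstrably does not suffice to make the error $O(n^\ell)$. Since both of these are exactly the obstructions that leave the statement a conjecture in the paper, the proposal should be read as a plausible programme plus a verified lower bound, not as a proof.
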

\begin{conjecture}\label{ghh}
For paths with odd length, 
$f(n,P_{2\ell+2})=8\ell(\ell+1)\left(\frac{n}{\ell}\right)^{\ell+1}+O(n^{\ell})$, for $\ell\geq 2.$
\end{conjecture}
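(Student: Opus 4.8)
The plan is to prove matching lower and upper bounds, both generalizing the $P_4$ argument of this paper; the even case $P_{2\ell}$ is the natural stepping stone, and the odd case is obtained from it by appending one pendant edge.

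\textbf{Lower bound.} First I would exhibit a planar graph realizing the claimed count. Since planarity forces the sum of the top $k$ degrees to be at most $2n+6k-16$ (Lemma~\ref{lm1}), at most two vertices can have degree $\sim n$; so for $\ell \geq 3$ the high degrees must be spread over $\ell$ hubs. I would take a planar ``chain of double wheels'': hubs $h_1,\dots,h_\ell$ placed along a path, with the remaining $n-\ell$ vertices split into $\ell$ nearly equal blocks of size $\approx n/\ell$, each block joined to its two consecutive hubs so that the whole graph is a triangulation; each hub then has degree $\approx 2n/\ell$. A path of length $2\ell$ zig-zags as $u_0 h_1 u_1 h_2 \cdots h_\ell u_\ell$, where each $u_i$ is chosen freely among the $\approx n/\ell$ common neighbours of $h_i,h_{i+1}$, producing $\approx (n/\ell)^{\ell+1}$ paths and, after optimizing the block sizes and counting orientations and shapes, the even-path constant $4\ell$. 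For $P_{2\ell+1}$ one appends a single extra edge at any of the $\ell+1$ free vertices $u_i$ (in one of two directions, into a bounded-degree neighbour), which multiplies the count by a factor $\approx 2(\ell+1)$; this is exactly the ratio $8\ell(\ell+1)/(4\ell)$ between the two conjectured constants and yields the lower bound.

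\textbf{Upper bound: central-edge decomposition.} Writing $L_\ell(v)$ for the number of paths of length $\ell$ with $v$ as an endpoint, every copy of $P_{2\ell+1}$ splits at its unique central edge $uv$ into a length-$\ell$ path ending at $u$ and a vertex-disjoint one ending at $v$, so
\begin{displaymath}
P_{2\ell+1}(G) \leq \sum_{uv \in E(G)} L_\ell(u)\,L_\ell(v),
\end{displaymath}
and the right-hand side overcounts only configurations whose two halves share a vertex. A shared vertex collapses one degree of freedom, so those terms contribute at the order of $f(n,P_{2\ell-1})=O(n^\ell)$, which is negligible; verifying this cleanly is a routine but necessary step.

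\textbf{Upper bound: reduction to a degree optimization.} Next I would bound $L_\ell(v)$ by a symmetric function of the degree sequence via the recursion $L_1(v)=d(v)$ and $L_j(v)\leq \sum_{w\in N(v)} L_{j-1}(w)$ (discarding the lower-order correction for revisited vertices). This expresses $\sum_{uv} L_\ell(u)L_\ell(v)$ as a bounded combination of monomials of total degree $\ell+1$ in the decreasing degree sequence $(x_1,\dots,x_n)$, which the planar constraints again confine to a polytope of the same shape as $A_n$ (partial-sum bounds $\sum_{i\leq k}x_i\leq 2n+6k-16$ and $\sum_i x_i\leq 6n-12$). I would then generalize the smoothing and convexity lemmas (Lemmas~\ref{lm4}--\ref{lm6}): a maximizer has its large coordinates nearly equal, only $O(1)$ coordinates exceed the ``bulk'' value $6$, and balancing the mass on the bounded initial segment pins the configuration to $\ell$ equal hubs of size $\approx 2n/\ell$, reproducing the constant $8\ell(\ell+1)(n/\ell)^{\ell+1}$.

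\textbf{Main obstacle.} The crux is the third step. In the $P_4$ case the bound $\sum_{i<j}x_i x_j^2$ is an honest function of the degree sequence, whereas for longer paths $L_\ell(v)$ genuinely depends on the local structure (codegrees and neighbourhood overlaps), so replacing it by a degree-sequence expression is lossy. Proving that this relaxation is still tight — that no planar graph beats the balanced-hub value even though the relaxed objective forgets the planarity of the hub--block incidences — is where the real difficulty lies, as is pinning down the exact constant rather than merely its order; I expect that carrying this out for $P_{2\ell}$ first and then transferring it via the pendant-edge correspondence, tracking the factor $2(\ell+1)$ throughout, is the most promising route.
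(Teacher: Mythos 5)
This statement is Conjecture~\ref{ghh}; the paper proves nothing about it and offers only the lower-bound construction (the blow-up of the independent set of a $2\ell$-cycle in Figure~\ref{fig2}, with Figure~\ref{fig1} serving for $\ell=2$), so there is no paper proof to compare against, and your text --- as you yourself acknowledge in the ``main obstacle'' paragraph --- is a research program rather than a proof. Your lower-bound sketch is essentially the paper's construction in spirit ($\ell$ hubs of degree about $2n/\ell$ alternating with balanced blocks), though two details are off: the paper arranges the hubs cyclically rather than in a chain, and ``appending a pendant edge at any of the $\ell+1$ free vertices $u_i$'' is not literally a path operation, since attaching an edge at an internal vertex of a path does not yield a longer path; the correct source of the factor $\ell+1$ is inserting one edge of the path threaded through a block between two consecutive hub visits, i.e.\ shapes of the form $\cdots h_i\,u_i\,u_i'\,h_{i+1}\cdots$, together with the endpoint extensions.

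The genuine gap is in the upper bound, and it is worse than the ``lossy relaxation'' issue you flag: the step you call routine is false. You claim that in $P_{2\ell+1}(G)\leq \sum_{uv\in E(G)} L_\ell(u)L_\ell(v)$ the pairs of halves sharing a vertex contribute only $O(n^{\ell})$ because ``a shared vertex collapses one degree of freedom.'' Test this on the graph of Figure~\ref{fig1} with $\ell=3$ (so $P_7$, conjectured maximum $\frac{96}{81}n^4\approx 1.19\,n^4$). For a hub $h$ one has $L_3(h)\geq (n-2)(n-3)$ via the paths $h\,w\,h'\,w'$, so the single hub--hub edge already contributes roughly $n^4$ to the right-hand side, and the $2(n-2)$ hub--block edges contribute about $12n^4$ more (each block vertex has $L_3\approx 6n$); yet this graph contains only $\Theta(n^3)$ copies of $P_7$. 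The shared vertices here are the \emph{hubs}, which are reused by both halves at no cost in degrees of freedom, so the degenerate pairs are $\Theta(n^{\ell+1})$ with a large constant, not $O(n^{\ell})$. Consequently the planar maximum of your right-hand side (and a fortiori of any degree-sequence relaxation of it over an $A_n$-type polytope, which must dominate its value on Figure~\ref{fig1}) exceeds the conjectured constant by an order-one factor, so optimizing the relaxed objective cannot ``pin the configuration to $\ell$ equal hubs'' and reproduce $8\ell(\ell+1)(n/\ell)^{\ell+1}$. The $P_4$ case survives precisely because the codegree bound $d(u,v)\leq\min\{d(u),d(v)\}$ happens to be saturated, to leading order, by the same two-hub graph that maximizes the relaxed objective; for $\ell\geq 2$ one must first subtract or structurally exclude the hub-reusing degenerate configurations before any degree-sequence optimization can be tight, and no such mechanism appears in your outline --- which is exactly why the statement remains open. (A minor further caution if you pursue the polytope route: the constraint $\sum_{i=1}^{k}x_i\leq 2n+6k-16$ of Lemma~\ref{lm1} fails for $k\leq 2$ --- Figure~\ref{fig1} itself has $x_1+x_2=2n-2>2n-4$ --- harmless at the $O(n^2)$ level in the paper, but not something to rely on when chasing exact constants.)
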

For $\ell\geq 3$, in both cases the lower bound is attained by a planar graph on $n$ vertices that is obtained from a balanced blowing up of a maximum independent set of vertices of a $2\ell$-vertex cycle and joining the vertices of each blown-up set by path, see Figure  \ref{fig2}. In the case of $\ell =2$, the lower bound is attained by an $n$-vertex planar graph given in Figure \ref{fig1}.\\
% In the case of $\ell =2$, the lower bound is attained from an $n$-vertex planar graph given in Figure \ref{fig1}. For $\ell\geq 3$,  see Figure  \ref{fig2}.
 
%Considering $P_5$, an $n$-vertex planar graph given in Figure \ref{fig1} which contains $n^3+5n^2-44n+50$  
%$P_5's$. We conjecture that
%\begin{conjecture}
%There is a positive number $n_0$ such that   $f(n,P_5)=n^3+5n^2-44n+50$ for all $n\geq n_0$.
%\end{conjecture}
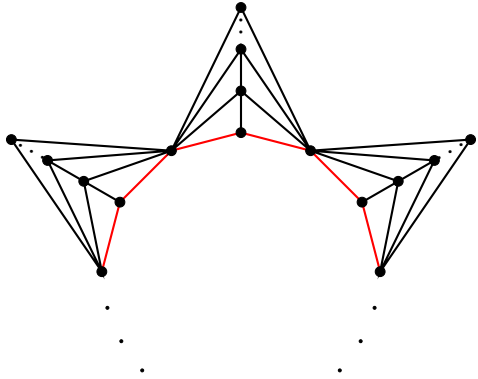
\begin{figure}[h]
\centering
\begin{tikzpicture}[scale=0.185]
\draw[thick,red](10,0)--(8.7,5)--(5,8.7)--(0,10)--(-5,8.7)--(-8.7,5)--(-10,0);
%(-5,-8.7)--(0,-10)--(5,-8.7);
\draw[thick](5,8.7)--(11.3,6.5)--(10,0)--(13.9,8)--(5,8.7)--(16.5,9.5)--(10,0)(8.7,5)--(13.9,8);
\draw[thick](-5,8.7)--(-11.3,6.5)--(-10,0)--(-13.9,8)--(-5,8.7)--(-16.5,9.5)--(-10,0)(-8.7,5)--(-13.9,8);
\draw[thick](5,8.7)--(0,13)--(-5,8.7)--(0,16)--(5,8.7)--(0,19)--(-5,8.7)(0,10)--(0,16);
% \draw[thick](-5,-8.7)--(0,-13)--(5,-8.7)--(0,-16)--(-5,-8.7)--(0,-19)--(5,-8.7)(0,-10)--(0,-16);
\draw[fill=black](0,19)circle(10pt);
% \draw[fill=black](0,17)circle(2pt);
% \draw[fill=black](-14.7,8.5)circle(2pt);
% \draw[fill=black](-15.6,9)circle(2pt);
\draw (-15.2,8.75) node[rotate=150]{$\dots$};
% \draw[fill=black](-15.2,8.75)circle(2pt);
% \draw[fill=black](14.7,8.5)circle(2pt);
% \draw[fill=black](15.2,8.75)circle(2pt);
\draw (15.2,8.75) node[rotate=30]{$\dots$};
% \draw[fill=black](15.6,9)circle(2pt);
\draw (0,17.4) node[rotate=90]{$\dots$};
% \draw[fill=black](0,17.5)circle(2pt);
% \draw[fill=black](0,18)circle(2pt);
% \draw (0,-17.4) node[rotate=90]{$\dots$};
% \draw[fill=black](0,-17)circle(2pt);
% \draw[fill=black](0,-17.5)circle(2pt);
% \draw[fill=black](0,-18)circle(2pt);
% \draw[fill=black](0,-19)circle(8pt);
\draw[fill=black](16.5,9.5)circle(10pt);
\draw[fill=black](-16.5,9.5)circle(10pt);
\draw[fill=black](10,0)circle(10pt);
\draw[fill=black](-10,0)circle(10pt);
\draw[fill=black](0,10)circle(10pt);
\draw[fill=black](0,13)circle(10pt);
\draw[fill=black](0,16)circle(10pt);
% \draw[fill=black](0,-10)circle(8pt);
% \draw[fill=black](0,-13)circle(8pt);
% \draw[fill=black](0,-16)circle(8pt);
\draw[fill=black](8.7,5)circle(10pt);
\draw[fill=black](11.3,6.5)circle(10pt);
\draw[fill=black](13.9,8)circle(10pt);
\draw[fill=black](-8.7,5)circle(10pt);
\draw[fill=black](-11.3,6.5)circle(10pt);
\draw[fill=black](-13.9,8)circle(10pt);
\draw[fill=black](5,8.7)circle(10pt);
\draw[fill=black](-5,8.7)circle(10pt);
% \draw[fill=black](-5,-8.7)circle(8pt);
%\draw[fill=black](5,-8.7)circle(8pt);
\draw[fill=black](-9.6,-2.6)circle(3pt);
\draw[fill=black](9.6,-2.6)circle(3pt);
\draw[fill=black](-8.6,-5)circle(3pt);
\draw[fill=black](8.6,-5)circle(3pt);
\draw[fill=black](-7.1,-7.1)circle(3pt);
\draw[fill=black](7.1,-7.1)circle(3pt);
\end{tikzpicture}
% \begin{tikzpicture}
% \begin{scope}[rotate=90]
% \filldraw (30:2.3) circle (2pt) (1,0) circle (2pt) -- (1.5,0) circle (2pt) -- (2,0) circle (2pt) -- (2.5,0) circle (2pt) (3,0) node[rotate=90]{$\cdots$} (3.5,0) circle (2pt) 
% (-30:2.3) circle (2pt);
% \draw  (30:2.3) -- (1,0) -- (-30:2.3) (30:2.3) -- (1.5,0)--(-30:2.3) circle (2pt) (30:2.3) --(2,0)--(-30:2.3) circle (2pt) (30:2.3) --(2.5,0)--(-30:2.3) circle (2pt) (30:2.3) -- (3.5,0) --(-30:2.3) circle (2pt);
% \end{scope}
% \begin{scope}[rotate=30]
% \filldraw (30:2.3) circle (2pt) (1,0) circle (2pt) -- (1.5,0) circle (2pt) -- (2,0) circle (2pt) -- (2.5,0) circle (2pt) (3,0) node[rotate=30]{$\cdots$} (3.5,0) circle (2pt) 
% (-30:2.3) circle (2pt);
% \draw  (30:2.3) -- (1,0) -- (-30:2.3) (30:2.3) -- (1.5,0)--(-30:2.3) circle (2pt) (30:2.3) --(2,0)--(-30:2.3) circle (2pt) (30:2.3) --(2.5,0)--(-30:2.3) circle (2pt) (30:2.3) -- (3.5,0) --(-30:2.3) circle (2pt);
% \end{scope}
% \begin{scope}[rotate=150]
% \filldraw (30:2.3) circle (2pt) (1,0) circle (2pt) -- (1.5,0) circle (2pt) -- (2,0) circle (2pt) -- (2.5,0) circle (2pt) (3,0) node[rotate=150]{$\cdots$} (3.5,0) circle (2pt) 
% (-30:2.3) circle (2pt);
% \draw  (30:2.3) -- (1,0) -- (-30:2.3) (30:2.3) -- (1.5,0)--(-30:2.3) circle (2pt) (30:2.3) --(2,0)--(-30:2.3) circle (2pt) (30:2.3) --(2.5,0)--(-30:2.3) circle (2pt) (30:2.3) -- (3.5,0) --(-30:2.3) circle (2pt);
% \end{scope}
% \end{tikzpicture}
\caption{The graph obtained by blowing up every other vertex in an even cycle and joining the copies of the vertices by a path. This graph attains the lower bound stated in Conjecture \ref{gff} and \ref{ghh}.}
\label{fig2}
\end{figure}

\section*{Acknowledgements}

The research of the second, the fifth and the seventh authors is partially supported by the National Research, Development and Innovation Office -- NKFIH, grant K 132696. 
The research of the third author was partially supported by Simons Foundation Collaboration Grant \#353292 and by the J. William Fulbright Educational Exchange Program.
The research of the fifth author is partially supported by  Shota Rustaveli National Science Foundation of Georgia SRNSFG, grant number DI-18-118.
\newpage

\bibliographystyle{abbrv}
\bibliography{main}

\end{document}